\theoremstyle{theorem}
\newtheorem{question}{Question}
\newtheorem{problem}[question]{Problem}
\newtheorem{conjecture}[question]{Conjecture}
\newtheorem{theorem}[question]{Theorem}
\newtheorem{proposition}[question]{Proposition}
\newtheorem{corollary}[question]{Corollary}
\newtheorem{lemma}[question]{Lemma}
\newtheorem{claim}[question]{Claim}
\newtheorem{remark}[question]{Remark}
\newtheorem{definition}[question]{Definition}
\newtheorem{construction}[question]{Construction}
\numberwithin{question}{section}
\numberwithin{equation}{section}
\def\eps{\varepsilon}
\title{Triangle-degrees in graphs and tetrahedron coverings in 3-graphs}
\author{Victor Falgas--Ravry\thanks{Ume{\aa} Universitet, Ume{\aa}, Sweden. Email: victor.falgas-ravry@umu.se} \and Klas Markstr\"om\thanks{Ume{\aa} Universitet, Ume{\aa}, Sweden. Email: klas.markstrom@umu.se}  \and Yi Zhao\thanks{Georgia State University, Atlanta GA, USA. Email: yzhao6@gsu.edu}}
\begin{document}
\maketitle
\begin{abstract}

We investigate a covering problem in $3$-uniform hypergraphs ($3$-graphs): given a $3$-graph $F$, what is $c_1(n,F)$, the least  integer $d$ such that if $G$ is an $n$-vertex $3$-graph with minimum vertex degree $\delta_1(G)>d$ then every vertex of $G$ is contained in a copy of $F$ in $G$ ?

We asymptotically determine $c_1(n,F)$ when $F$ is the generalised triangle $K_4^{(3)-}$, and we give close to optimal bounds in the case where $F$ is the tetrahedron $K_4^{(3)}$ (the complete $3$-graph on $4$ vertices).

This latter problem turns out to be a special instance of the following problem for graphs: given an $n$-vertex graph $G$ with $m> n^2/4$ edges, what is the largest $t$ such that some vertex in $G$ must be contained in $t$ triangles? We give upper bound constructions for this problem that we conjecture are asymptotically tight. We prove our conjecture for tripartite graphs, and use flag algebra computations to give some evidence of its truth in the general case.

\end{abstract}

\section{Introduction}

Let $F$ be a graph with at least one edge. What is the maximum number of edges $\mathrm{ex}(n,F)$ an $n$-vertex graph can have if it does not contain a copy of $F$ as a subgraph? This is a classical question in extremal graph theory. If $F$ is a complete graph, then the exact answer is given by Tur\'an's theorem~\cite{Turan41}, one of the cornerstones of extremal graph theory. For other graphs $F$, the value of $\mathrm{ex}(n,F)$ is determined up to a $o(n^2)$ error term by the celebrated Erd{\H o}s--Stone theorem~\cite{ErdosStone46}.

Ever since Tur\'an's foundational result, there has been significant interest in obtaining similar ``Tur\'an--type'' results for $r$-uniform hypergraphs ($r$-graphs), with $r\geq 3$. The extremal theory of hypergraphs has however turned out to be much harder, and even the fundamental question of determining the maximum number of edges in a $3$-graph with no copy of the tetrahedron $K_4^{(3)}$ (the complete $3$-graph on $4$ vertices) remains open --- it is the subject of a 70-years old conjecture of Tur\'an, and of an Erd{\H o}s \$ 1000 prize\footnote{In fact, to earn this particular Erd{\H o}s  prize, it is sufficient to determine the limit $\lim_{n\rightarrow \infty}\mathrm{ex}(n, K_t^{(r)}) /\binom{n}{r}$ for \emph{any} integers $t>r\geq 3$.}. Most of the research efforts have focussed on the case of $3$-graphs, where a small number of exact and asymptotic results are now known --- see \cite{BaberTalbot12, Bollobas74, deCaenFuredi00, FalgasRavryVaughan13, FurediPikhurkoSimonovits05}, as well as the surveys by F\"uredi ~\cite{Furedi91}, Sidorenko~\cite{Sidorenko95}, and Keevash~\cite{Keevash11}.

It is well-known that the Tur\'an problem for an $r$-graph $F$ is essentially equivalent to identifying the minimum vertex-degree required to guarantee the existence of a copy of $F$. More recently  \cite{CzNa, LoMarkstrom14, MuZh07},  there has been interest in variants where one considers what minimum \emph{$i$-degree} condition is required to guarantee the existence of a copy of $F$. 
Given an $i$-set $S\subseteq V(G)$ with $i\leq r$, its \emph{neighbourhood} in $G$ is the collection
\[\Gamma(S)=\Gamma_G(S) :=\{T \subseteq V(G)\setminus S: \ S \cup T\in E(G) \}\]
of $(r-i)$-sets $T$ whose union with $S$ makes an edge of $G$.  
The neighbourhood of $S$ defines an $(r-i)$-graph
\[G_S:= \left(V(G)\setminus S,  \Gamma_G(S)\right), \]
which is called the \emph{link graph} of $S$. 
The \emph{degree} of $S$ in $G$ is  the size $\deg_G(S)=\deg(S):=\vert \Gamma(S)\vert$ of its neighbourhood.  The \emph{minimum $i$-degree} $\delta_i(G)$ of $G$ is the minimum of $\deg(S)$ over all $i$-subsets $S\subseteq V(G)$. In particular,  the case $i=r-1$ has received particular attention; $\delta_{r-1}(G)$ is known as the minimum \emph{codegree} of $G$, and a minimum codegree condition is the strongest single degree condition one can impose on an $r$-graph. Determining what minimum codegree forces the existence of a copy of a fixed $r$-graph $F$ is known as the codegree density problem~\cite{MuZh07}. A few results on the codegree density for various small $3$-graphs are known, see~\cite{ FalgasRavryMarchantPikhurkoVaughan15, FalgasRavryPikhurkoVaughanVolec17+,KeZh,Mubayi05}.

In a different direction, there has been significant recent research activity devoted to generalising another foundational result in extremal graph theory. Let $F$ be a graph whose order divides $n$. What minimum degree condition is required to guarantee that a graph on $n$ vertices contains an \emph{$F$-tiling} --- a collection of $n/v(F)$ vertex-disjoint copies of $F$? In the case of complete graphs, this was answered by the celebrated Hajnal--Szemer\'edi theorem~\cite{HajnalSzemeredi70}, which (under the guise of equitable colourings) has applications to scheduling problems. For a general graph $F$, the K\"uhn--Osthus theorem~\cite{KuhnOsthus09}  determines the minimum degree-threshold for $F$-tilings up to a constant additive error.

There  has been a growing interest in determining analogous tiling thresholds in $r$-graphs for $r\geq 3$, see the surveys by R\"odl and Ruci\'nski \cite{RodlRucinski10}, and Zhao \cite{Zhao16} devoted to the subject.
In an effort to generalise Dirac's theorem on Hamilton cycles to hypergraphs, R\"odl, Ruci\'nski and Szemer\'edi~\cite{RodlRucinskiSzemeredi09} determined the minimum codegree threshold for the existence of a perfect matching in $r$-graphs for $r\ge 3$. The paper also introduced the hugely influential \emph{absorption method}, which has been used as a key ingredient in many of the results in the area obtained since.
Beyond perfect matchings, codegree tiling thresholds have by now been determined for a number of small $3$-graphs, including $K_4^{(3)}$~\cite{KeevashMycroft14, LoMarkstrom15},  $K_4^{(3)-}$~\cite{HanLoTreglownZhao15, LoMarkstrom13}, and $K_4^{(3)--}$ ($K_4^{(3)}$ with two edges removed)~\cite{CzygrinowDeBiasioNagle2014, KuhnOsthus06b}. In addition, the codegree tiling thresholds for $r$-partite $r$-graphs have been studied recently \cite{MR3567016, MR3656341, 2016arXiv161207247G, 2017arXiv170108115H,Mycroft16}

Turning to minimum vertex-degree tiling thresholds, fewer results are known. The vertex-degree thresholds for perfect matchings  were determined for $3$-graphs by Han, Person, and Schacht~\cite{HPS} (asymptotically) and by K\"uhn, Osthus and Treglown~\cite{KuhnOsthusTreglown13} and Khan~\cite{Khan13} (exactly). 
Han and Zhao~\cite{HanZhao15} determined the vertex-degree tiling threshold for $K_4^{(3)--}$, while Han, Zang, andZhao~\cite{HanZangZhao17} asymptotically determined the vertex-degree tiling threshold for all complete $3$-partite $3$-graphs.

As a key part of their argument, Han, Zang, and Zhao considered a certain $3$-graph covering problem and showed it was distinct from the corresponding Tur\'an-type existence problem. This stands in contrast with the situation for ordinary graphs, where existence and covering thresholds essentially coincide.  
Given an $r$-graph $F$, Falgas--Ravry and Zhao~\cite{FalgasRavryZhao16} introduced the notion of an $F$-covering, which is intermediate between that of the existence of a single copy of $F$ and the existence of an $F$-tiling.
 
 We say that an $r$-graph $G$ has an \emph{$F$-covering} if every vertex in $G$ is contained in a copy of $F$ in $G$.  Equivalently an $F$-covering of $G$ is a collection $C$ of copies $F$ whose union covers all of $V(G)$.
For every positive integer $i \leq r-1$, the \emph{$i$-degree $F$-covering threshold} is the function 
 \begin{align}\label{eqdef: i-degree covering threshold}
 \mathrm{c}_i(n, F):=\max \left\{\delta_i(G): \ v(G)=n, \ G \textrm{ has no }F-\textrm{covering}\right\}.
 \end{align}
 We further let the \emph{$i$-degree $F$-covering density} to be the limit\footnote{This limit can be shown to exist --- see~\cite[Footnote 1]{FalgasRavryZhao16}.} 
 \begin{align}\label{eqdef: i-degree F-covering density}
 	c_i(F):= \lim_{n\rightarrow \infty} c_i(n,F) / \binom{n-i}{r-i}.
 \end{align}

Let $K_t^{(r)}$ denote the complete $r$-graph on $t$ vertices and $K_t^{(r)-}$ denote the $r$-graph obtained by removing one edge from $K_t^{(r)}$. 
A tight $r$-uniform $t$-cycle $C_t^{(r)}$ is an $r$-graph with a cyclic ordering of its $t$ vertices such that every $r$ consecutive vertices under this ordering form an edge. 
Falgas--Ravry and Zhao~\cite{FalgasRavryZhao16} determined $c_2(F)$, where $F$ is $K_4^{(3)}$, $K_4^{(3)-}$, $K_5^{(3)-}$, and $C_5^{(3)}$. Han, Lo, and Sanhueza-Matamala \cite{2017arXiv170108115H} determined $c_{r-1}(C_t^{(r)})$ for all $r\geq 3$ and $t>2r^2$.

In this paper we investigate $c_1(n, F)$ and $c_1(F)$ for various 3-graphs $F$. We first consider $K_4^{(3)-}$.  
Let $f_n(d)$ be the function
\begin{align}\label{eqdef: f_n(d)}
f_n(d):= \binom{n-2}{2}+d -d(d-1)-\binom{d}{2}=\frac{1}{2}\left(n^2-5n+6 -3d^2 +5d \right). 
\end{align}
Observe that for fixed $n$, $f_n(d)$ is a decreasing function of  $d$ over the interval $[1, n-2]$. On the other hand $\frac{(n-1)}{2}d$ is an increasing function of $d$, so there exists a unique $d_{\star}=d_{\star}(n)$ such that $\frac{(n-1)}{2}d_{\star}=f(n,d_{\star})$, namely
\[d_{\star} =   \frac{1}{6}\left(\sqrt{13n^2 -72 n +108} -n +6 \right) = \frac{\sqrt{13}-1}{6}n +O(1).
\]
\begin{theorem}\label{theorem: k4-}
For all odd integer $n$, $\frac{n-1}2 \lfloor d_{\star}\rfloor \le c_1(n, K_4^{(3)-})\le \lfloor \frac{n-1}2 d_{\star}\rfloor$. In particular, $c_1(K_4^{(3)-})=\frac{\sqrt{13}-1}{6}=0.4342\ldots$.
\end{theorem}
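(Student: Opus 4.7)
The plan is to characterise when a vertex $v$ of $G$ is \emph{uncovered} (i.e.\ lies in no copy of $K_4^{(3)-}$) in terms of the link graph $L := G_v$ and the residual $3$-graph $H := G - v$ on $V(G) \setminus \{v\}$, derive a matching degree upper bound via convexity, and exhibit a tight construction. The starting point is the equivalence: $v$ is uncovered if and only if \textbf{(A)} $L$ is triangle-free and \textbf{(B)} every edge $\{a,b,c\}$ of $H$ contains at most one of the pairs $ab,ac,bc$ in $L$. This is immediate from the case analysis: any copy of $K_4^{(3)-}$ on $\{v,a,b,c\}$ has three of its four triples as edges; if $abc$ is the missing one we get a triangle in $L$, otherwise some $H$-edge has two of its pairs in $L$.

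For the upper bound, let $v$ be uncovered, $d_a := \deg_L(a)$, $m := |E(L)|$, and $\bar d := 2m/(n-1)$, so that $\deg_G(v) = m = \tfrac{n-1}{2}\bar d$. For $a \neq v$, set $S := N_L(a)$ and $T := V \setminus (S \cup \{a,v\})$; then condition (B) forbids the reduced link $G_a|_{V\setminus\{v\}}$ from containing any pair in $\binom{S}{2}$ (which would contribute $L$-pairs $ax, ay$) or any $L$-edge crossing $S$--$T$ (contributing $L$-pairs $ax$ and $xy$). Hence
\[
\deg_G(a) \;\leq\; d_a + \binom{n-2}{2} - \binom{d_a}{2} - e_{ST}(a),
\]
where $e_{ST}(a)$ is the number of $L$-edges between $S$ and $T$. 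By (A), $S$ is $L$-independent, so $e_{ST}(a) = \sum_{x \in S}(d_x-1)$, and double counting gives $\sum_{a \neq v} e_{ST}(a) = \sum_a d_a^2 - 2m$. Averaging the degree bound over $a \neq v$ and applying Jensen's inequality ($\tfrac{1}{n-1}\sum\binom{d_a}{2} \geq \binom{\bar d}{2}$) and Cauchy--Schwarz ($\tfrac{1}{n-1}\sum d_a^2 \geq \bar d^2$), the two convex estimates combine to yield exactly $\tfrac{1}{n-1}\sum_{a\neq v}\deg_G(a) \leq f_n(\bar d)$. Since $\deg_G(v) = \tfrac{n-1}{2}\bar d$ is increasing in $\bar d$ while $f_n$ is decreasing on $[1, n-2]$, we have $\delta_1(G) \leq \min\left(\tfrac{n-1}{2}\bar d, f_n(\bar d)\right)$, which is maximised over $\bar d$ at $\bar d = d_\star$, yielding $\delta_1(G) \leq \lfloor \tfrac{n-1}{2}d_\star\rfloor$.

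For the lower bound, set $d := \lfloor d_\star \rfloor$. Since $n$ is odd, $n-1$ is even and $d < (n-1)/2$, so a $d$-regular bipartite graph $L$ on $n-1$ vertices exists (remove $(n-1)/2 - d$ disjoint perfect matchings from $K_{(n-1)/2,(n-1)/2}$ via K\"onig's theorem). Define $H$ on $V \setminus\{v\}$ by taking as edges exactly those triples containing at most one pair in $L$, and let $E(G) := \{vxy : xy \in L\} \cup E(H)$. Conditions (A) and (B) hold by construction, so $v$ lies in no $K_4^{(3)-}$. A direct count with $d_a = d$ for every $a$---equivalently, all of the upper-bound inequalities above becoming equalities---gives $\deg_G(v) = \tfrac{n-1}{2}d$ and $\deg_G(a) = f_n(d)$ for every $a \neq v$; since $d < d_\star$ implies $\tfrac{n-1}{2}d < f_n(d)$, we conclude $\delta_1(G) = \tfrac{n-1}{2}\lfloor d_\star \rfloor$. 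The asymptotic identity $c_1(K_4^{(3)-}) = (\sqrt{13}-1)/6$ then follows after dividing the matching bounds by $\binom{n-1}{2}$ and letting $n \to \infty$.

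The hard part is the averaging/convexity step in the upper bound: one must verify that the two \emph{separate} penalties on $\deg_G(a)$---the $\binom{d_a}{2}$ ``interior'' forbidden pairs and the $e_{ST}(a)$ ``crossing'' forbidden pairs---combine under Jensen's inequality and Cauchy--Schwarz to exactly the $-3\binom{\bar d}{2}$ coefficient in $f_n(\bar d) = \binom{n-2}{2} + \bar d - 3\binom{\bar d}{2}$. This precise matching is what certifies extremality of the $d$-regular bipartite construction: the somewhat mysterious ``$-d(d-1)$'' term in the definition of $f_n(d)$ is simply $e_{ST}(a)$ for that construction, counting the crossing $L$-edges that must be excluded from $G_a|_{V\setminus\{v\}}$.
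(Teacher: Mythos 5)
Your proposal is correct and follows essentially the same route as the paper: the same characterisation of an uncovered vertex via a triangle-free link plus the ``no edge with two link-pairs'' condition, the same count of forbidden pairs (the $\binom{d_a}{2}$ interior pairs plus the crossing link-edges, which is exactly the paper's $F(v)$), the same convexity/averaging step, and the identical $d$-regular bipartite link construction for the lower bound. Your use of Jensen plus Cauchy--Schwarz and the parameterisation by the average codegree $\bar d$ are only cosmetic variants of the paper's single Jensen application to $\tfrac{1}{2}(3t^2-5t)$ with the minimum-degree parameter $d$.
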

The upper and lower bounds on $c_1(n,F)$ in Theorem~\ref{theorem: k4-} are apart by less than $n/2$. However it seems much more work will be needed to determine $c_1(n,F)$ exactly. As a first step in this direction, we prove the following stability theorem characterising near-extremal configurations. Let $c_{\star}= \frac{\sqrt{13}-1}{6}$.
\begin{theorem}\label{theorem: stability}
	For every $\varepsilon>0$, there exists $\delta>0$ and $n_0\in \mathbb{N}$ such that the following holds: for every $n\geq n_0$, if $H$ is a $3$-graph on $n+1$ vertices with minimum vertex degree at least $\left(c_{\star}-\delta\right)\frac{n^2}{2}$ and $x\in V(H)$ is not covered by a copy of $K_4^{(3)-}$ in $H$, then the link graph $H_x$ can be made bipartite by removing at most $\varepsilon n^2$ edges.
\end{theorem}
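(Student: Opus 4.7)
The plan is to use the non-coverage of $x$ to impose strong quadratic constraints on the degree sequence of $L := H_x$, then use Cauchy--Schwarz to force $L$ to be nearly $c_\star n$-regular with $m := |E(L)| = c_\star n^2/2 + o(n^2)$, and finally invoke the Andr\'asfai--Erd\H{o}s--S\'os theorem on a large induced subgraph of $L$ to deduce near-bipartiteness. First, unpack what non-coverage means: $x$ lies in no copy of $K_4^{(3)-}$ if and only if (a) $L$ is triangle-free (ruling out copies of $K_4^{(3)-}$ where $x$ is the ``apex'' vertex), and (b) for every $a \in V(L)$, the link graph $H_a$ contains no edge with both endpoints in $N_L(a)$ (ruling out copies where another vertex is the apex).

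The core calculation is a pairwise codegree bound. Let $u \neq v \in V(L)$. If $uv \in L$, then any $w$ with $\{u, v, w\} \in H$ must satisfy $w \notin N_L(u) \cup N_L(v)$ by condition (b) applied at $u$ and $v$; since $N_L(u) \cap N_L(v) = \emptyset$ by triangle-freeness, this gives $\deg_H(\{u, v\}) \leq n + 1 - d_L(u) - d_L(v)$. If $uv \notin L$ with $u, v \neq x$, then (b) applied at $w$ forbids $w \in N_L(u) \cap N_L(v)$, and $x$ itself cannot serve as such a $w$, yielding $\deg_H(\{u, v\}) \leq n - 2 - |N_L(u) \cap N_L(v)|$. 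Summing $\sum_{v \in V(H) \setminus \{u\}} \deg_H(\{u, v\}) = 2 \deg_H(u)$ and using the identity $\sum_{v \notin N_L(u) \cup \{u\}} |N_L(u) \cap N_L(v)| = D_u - d_L(u)$, where $D_u := \sum_{v \in N_L(u)} d_L(v)$, yields the key inequality
\[
2 \deg_H(u) + 2 D_u \leq 5\, d_L(u) - d_L(u)^2 + (n-1)(n-2).
\]

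Applying $\deg_H(u) \geq (c_\star - \delta) n^2/2$, summing over $u \in V(L)$, and using $\sum_u D_u = \sum_u d_L(u)^2$ gives
\[
3 \sum_u d_L(u)^2 \leq 10 m + (1 - c_\star + \delta) n^3 + O(n^2).
\]
Combining with the Cauchy--Schwarz bound $\sum_u d_L(u)^2 \geq 4 m^2/n$ and using the defining identity $3 c_\star^2 + c_\star = 1$ of $c_\star$, one obtains $m \leq c_\star n^2/2 + O(\delta) n^2$. Together with the hypothesis $m \geq (c_\star - \delta) n^2/2$, this pins down $m = c_\star n^2/2 + O(\delta) n^2$ and forces the Cauchy--Schwarz inequality to be nearly tight: $\sum_u (d_L(u) - c_\star n)^2 = O(\delta) n^3$.

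To conclude, by Chebyshev the set $S := \{v \in V(L) : d_L(v) < (c_\star - \eta) n\}$ has size $|S| = O(\delta/\eta^2) \cdot n$ for any fixed $\eta > 0$. Since $c_\star = \frac{\sqrt{13} - 1}{6} > \frac{2}{5}$, take $\eta := (c_\star - 2/5)/2$; for $\delta$ sufficiently small relative to $\eta$, the induced subgraph $L' := L[V(L) \setminus S]$ is triangle-free with minimum degree strictly greater than $\frac{2}{5}|V(L')|$, and hence bipartite by the Andr\'asfai--Erd\H{o}s--S\'os theorem. Extending this bipartition of $L'$ arbitrarily to all of $V(L)$ introduces at most $|S| \cdot \Delta(L) = O(\delta) n^2$ within-part edges, using the bound $\Delta(L) \leq (\sqrt{1 - c_\star} + o(1)) n$ derived from the trivial estimate $|E(H_a)| \leq \binom{n}{2} - \binom{d_L(a)}{2}$ combined with the minimum degree hypothesis. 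For $\delta = \delta(\varepsilon)$ sufficiently small, the total is at most $\varepsilon n^2$, completing the proof. The main technical obstacle will be carefully tracking the lower-order error terms through the codegree double-counting and the Cauchy--Schwarz estimate in order to obtain the clean variance bound $\sum_u (d_L(u) - c_\star n)^2 = O(\delta) n^3$ with quantitatively usable constants.
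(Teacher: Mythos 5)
Your proposal is correct and follows essentially the same route as the paper: both arguments extract the same two structural consequences of non-coverage (triangle-freeness of $H_x$ and the forbidden triples spanning two link edges), count the forced non-edges at each vertex to obtain an upper bound on $\sum_u d_{H_x}(u)^2$ of order $(1-c_\star+\delta)n^3/3$, exploit the identity $1-c_\star=3c_\star^2$ to conclude that almost all link degrees are close to $c_\star n>\tfrac{2}{5}n$, and finish by deleting the few deficient vertices and applying the Andr\'asfai--Erd\H{o}s--S\'os theorem. The only difference is cosmetic: you obtain the degree concentration via Cauchy--Schwarz tightness and a Chebyshev argument, whereas the paper packages the same convexity step in a Karamata-type lemma (Lemma~\ref{lemma: extended Jensen}).
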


Next we consider $K_4^{(3)}$.  
\begin{theorem}\label{theorem: k4 bounds}
	\[\frac{19}{27}= 0.7037\ldots \leq c_1(K_4^{(3)}) \leq \frac{19}{27} + 7.4\times 10^{-9}.\]
\end{theorem}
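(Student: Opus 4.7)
The plan relies on a reduction of the covering problem to a graph-theoretic one. A vertex $v$ of a 3-graph $G$ lies in a copy of $K_4^{(3)}$ if and only if the link graph $G_v$ contains a triangle $\{a,b,c\}$ with $\{a,b,c\}\in E(G)$. Hence, given any graph $L$ on $n-1$ vertices, one can construct a 3-graph with $v$ uncovered by setting $G_v:=L$ and letting $G\setminus v$ consist of every triple of $V\setminus\{v\}$ except the triangle-triples of $L$; this maximises the remaining degrees and gives $\deg_G(u)=\deg_L(u)+\binom{n-2}{2}-t_u(L)$ for $u\ne v$ and $\deg_G(v)=|E(L)|$, where $t_u(L)$ counts the triangles of $L$ through $u$. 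After dividing by $\binom{n-1}{2}$ and letting $n\to\infty$ this identifies
\[
c_1(K_4^{(3)}) \;=\; \sup_L \min\!\Bigl(\mu(L),\; 1-\max_{u\in V(L)}\tau_u(L)\Bigr),
\]
where $\mu(L)$ is the edge density of $L$ and $\tau_u(L):=t_u(L)/\binom{|V(L)|-1}{2}$ is the normalised triangle density at $u$ --- precisely the graph problem previewed in the abstract.

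For the lower bound I would exhibit the following construction. Let $N:=n-1=3k$ with $9\mid k$, partition $V(L)$ into classes $A,B,C$ of size $k$, include all $K_{k,k,k}$-edges between distinct classes, and inside each class add an arbitrary triangle-free $(k/9)$-regular graph (e.g.\ a balanced bipartite piece). One computes $|E(L)|=\tfrac{19}{6}k^2+O(k)$ and, by the three-fold symmetry together with the triangle-freeness of the intra-class graphs, $t_u(L)=\tfrac{4}{3}k^2+O(k)$ for every $u$. Normalising yields $\mu(L)\to 19/27$ and $\tau_u(L)\to 8/27$ uniformly; the resulting 3-graph $G$ then has $\delta_1(G)=(\tfrac{19}{27}+o(1))\binom{n-1}{2}$ while leaving $v$ uncovered.

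For the upper bound I would apply Razborov's flag-algebra semidefinite-programming method to the reduced graph problem. The inequality to prove --- that for every $L$ either $\mu(L)\le \tfrac{19}{27}+\varepsilon$, or some vertex $u_0$ has $\tau_{u_0}(L)\ge \tfrac{8}{27}-\varepsilon$ --- is not a direct consequence of Razborov's classical triangle-versus-edge theorem (which only yields an error of order $10^{-3}$, as its extremal graphs are different from ours). The stronger bound must instead be extracted from an SDP relaxation over \emph{rooted} graph flags, where the root plays the role of the candidate ``heavy'' vertex $u_0$; one would use flags on up to $7$ or $8$ vertices, solve the SDP numerically with software such as Flagmatic together with CSDP or SDPA, and round the numerical dual to a verifiable rational certificate. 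The tiny residue $7.4\times 10^{-9}$ in the theorem is exactly what one would expect from the precision loss in this rounding step (and possibly from a small SDP duality gap at the chosen flag size).

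The main obstacle is the flag-algebra half. One must first formalise the min-max objective in terms of rooted flag densities, then select a flag basis rich enough for the SDP relaxation to come within $10^{-8}$ of $19/27$, and finally perform the nontrivial floating-point-to-rational rounding to obtain a mechanically verifiable certificate. By contrast, the lower-bound construction and its verification are self-contained and reduce to a short direct degree count.
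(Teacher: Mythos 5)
Your proposal follows essentially the same route as the paper: the same link-graph reduction (the paper's Proposition~\ref{proposition:  r-unif K(r+1) covering threshold and (r-1)-unif Kr degree} and Corollary~\ref{corollary: covering density for complete r-graphs on r+1 vertices}), the identical lower-bound construction (complete balanced tripartite graph plus an $n/27$-regular triangle-free graph inside each part, giving degree $19n/27$ and triangle-degree $4n^2/27$ at every vertex), and the same upper-bound strategy via a Flagmatic semidefinite computation bounding the edge density of graphs whose rooted-triangle density $T_1$ is capped, which is exactly the paper's Problem~\ref{problem: pointwise bound} and Proposition~\ref{prop: pointwise flag algebra bounds}. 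The only difference is that you describe the flag-algebra certificate as a plan rather than exhibiting it, but that matches the paper, which likewise defers to a computer-generated certificate.
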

The upper bound was derived from the flag algebra method. We believe that the lower bound is tight. As we show in Section~\ref{subsection: K4}, the problem of determining $c_1(K_4^{(3)})$ is equivalent to (a special case of) a problem about \emph{triangle-degrees} in graphs.

Given a graph $G$, the \emph{triangle-degree} of a vertex $x\in V(G)$ is the number of triangles that contains $x$. The well-studied \emph{Rademacher--Tur\'an} problem concerns the smallest \emph{average} triangle-degree among all graphs with a given edge density (the edge density $\rho(G)$ is defined as $e(G)/ \binom{v(G)}2$).  
This problem attracted significant attention (see~\cite{Bollobas76, Erdos62, Fisher89, LovaszSimonovits76, LovaszSimonovits83}) until it was resolved asymptotically by Razborov~\cite{Razborov08} using the framework of his newly-developed theory of flag algebras. Different proofs expressed in the language of weighted graphs were later found by Nikiforov~\cite{Nikiforov11} and by Reiher~\cite{Reiher16} (who generalised Razborov's result to cliques of order $4$ and of arbitrary order $t$, respectively). 

Let $t_{\mathrm{max}}(G)$ denote the maximum triangle-degree in $G$. 
(This is related to but different from the well-studied \emph{book number}, which is the maximum number of triangles containing a fixed edge of $G$, see the discussion in Section~\ref{section: concluding remarks} for details.)
For $\rho\in [0,1]$, we define 
\begin{align}\label{equation: min triangle-degree density}
\tau(\rho):=\liminf_{n \rightarrow\infty} \min \{t_{\mathrm{max}}(G)/\binom{n-1}{2}: \ v(G)=n, \ \rho(G)\geq \rho\},
\end{align}
which is the asymptotically smallest maximum scaled triangle-degree in a graph with edge density $\rho$. 
We derive the following upper bounds for $\tau(\rho)$ and conjecture that they are tight. If Conjecture~\ref{conjecture: tau value} holds, then $c_1(K_4^{(3)})=\frac{19}{27}$ (see 
Proposition~\ref{proposition: conjecture implies c1(K4) bound tight}).
\begin{theorem}\label{theorem: upper bounds on tau}
	Suppose $\rho\in [\frac{r-1}{r}, \frac{r}{r+1}]$, for some $r\in \mathbb{N}$. Then
	\[\tau(\rho)\leq \left\{ \begin{array}{ll}
	\frac{(r-1)(r-2)}{r^2} +\frac{3(r-1)}{r}\left(\rho -\frac{r-1}{r}\right) & \textrm{if }\frac{r-1}{r}\leq \rho \leq \frac{r}{r+1}-\frac{1}{3r(r+1)} \\
	\frac{r(r-1)}{(r+1)^2} -\frac{3(r-1)}{r+1}\left(\frac{r}{r+1}-\rho\right) & \textrm{if }\frac{r}{r+1}-\frac{1}{3r(r+1)}\leq \rho \leq \frac{r}{r+1}.
	\end{array}    \right. \]	
\end{theorem}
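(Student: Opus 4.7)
The plan is to prove both upper bounds by exhibiting explicit constructions that interpolate between the Tur\'an graphs $T(n,r)$ and $T(n,r+1)$, which already realise equality at the endpoints $\rho = (r-1)/r$ and $\rho = r/(r+1)$ respectively. A brief algebraic manipulation shows that the two candidate expressions agree at the breakpoint $\rho = r/(r+1) - 1/(3r(r+1))$ on the common value $(r-1)^2/(r(r+1))$, so the piecewise definition is consistent.

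For the first piece I would take $T(n,r)$ with parts $V_1, \ldots, V_r$ of equal size $n/r$, and add inside each part a triangle-free $d$-regular graph with $d = (\rho - (r-1)/r)n$; since $d \le 2n/(3r(r+1)) < n/(2r)$ throughout this range, a $d$-regular bipartite graph on the $n/r$ vertices of each part suffices. Writing $m = nd/2$ for the total number of added edges (so that the density is exactly $\rho$), a vertex $v \in V_i$ has triangle degree equal to $\binom{r-1}{2}(n/r)^2 + d(r-1)n/r + (r-1)m/r$: the first term counts original $T(n,r)$-triangles through $v$, the second counts triangles $\{v,u,w\}$ with $u \in V_i$ a new in-part neighbour of $v$ and $w \in V \setminus V_i$, and the third counts triangles using one added edge inside some part $V_j$ with $j \ne i$. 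Triangle-freeness of the added $d$-regular graph rules out triangles with all three vertices in $V_i$. Substituting the values of $d$ and $m$ in terms of $\rho$ and normalising by $\binom{n-1}{2}$ produces the first-piece formula exactly.

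For the second piece I would take $T(n,r+1)$ (parts of size $n/(r+1)$) and remove a $k$-regular subgraph $H$, with $k$ chosen so that $\rho = r/(r+1) - 2|H|/n^2$. Each edge of $H$ is contained in $(r-1)n/(r+1)$ triangles of $T(n,r+1)$; summing the contributions from the $k$ $H$-edges at a vertex $v \in V_i$ (each destroying $(r-1)n/(r+1)$ triangles at $v$) and from the $H$-edges wholly inside $V \setminus V_i$ (each destroying one triangle at $v$) gives a total decrement of $3k(r-1)n/(2(r+1))$; subtracting from the uniform triangle degree $\binom{r}{2}(n/(r+1))^2$ of $T(n,r+1)$ and normalising yields the second-piece formula. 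The main obstacle is to ensure that this count is exact, i.e.\ that no triangle of $T(n,r+1)$ is destroyed by more than one edge of $H$; this is arranged by choosing $H$ to be \emph{part-consistent}, meaning each vertex's $H$-neighbourhood is contained in a single other part. When $r+1$ is even this is immediate by pairing up the parts and taking $H$ to be a disjoint union of $k$-regular bipartite graphs between paired couples; the $r=2$ case (where $H$ is a single $k$-regular bipartite graph between two of the three parts, and the decrement turns out uniform across all three parts by a numerical coincidence specific to $r=2$) serves as a model, but the general odd-$(r+1)$ case requires a subtler construction and is the most delicate step of the argument.
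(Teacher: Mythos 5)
Your first (lower-interval) construction and its triangle-degree count coincide with the paper's: complete balanced $r$-partite graph plus a triangle-free $d$-regular graph inside each part, with exactly the same three-term count, so that half of the statement is fine. Your analysis of the second piece is also on the right track — you correctly identify that one must remove a regular, ``part-consistent'' subgraph from $T(n,r+1)$ so that no triangle loses more than one edge (otherwise the edge-by-edge count of destroyed triangles overcounts and the upper bound on $t_G(x)$ is not justified), and your decrement $\tfrac{3k(r-1)n}{2(r+1)}$ matches the target formula. The problem is that you only exhibit such a subgraph when $r+1$ is even (pairing up the parts) and in the ad hoc case $r=2$; for $r+1$ odd with $r\ge 4$ you explicitly say a ``subtler construction'' is needed and do not supply it. Since the theorem is claimed for every $r\in\mathbb{N}$, this is a genuine gap: without the construction the second bound is unproved for, e.g., $r=4$ (five parts).

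The missing idea is small but concrete, and it is exactly what the paper does. Split each part $V_i$ of the balanced complete $(r+1)$-partite graph into two equal halves $V_i'$ and $V_i''$, fix any permutation $\phi$ of $[r+1]$ without fixed points (these exist for all $r+1\ge 2$), and for each $i$ replace the complete bipartite graph between $V_i'$ and $V_{\phi(i)}''$ by an arbitrary $d$-regular bipartite graph with $d=\bigl\lceil\bigl(\rho-\tfrac{r}{r+1}+\tfrac{1}{2(r+1)}\bigr)n\bigr\rceil$. Every vertex then loses the same number $\tfrac{n}{2(r+1)}-d$ of edges, all going into a single other part (vertices of $V_i'$ lose edges only to $V_{\phi(i)}$, vertices of $V_i''$ only to $V_{\phi^{-1}(i)}$), so the graph is regular and part-consistent in your sense; and because $\phi$ is a bijection and the primed/double-primed roles of a vertex are mutually exclusive, no triangle of $T(n,r+1)$ can lose two removed edges. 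This single construction covers all $r+1\ge 2$ uniformly, subsuming your even-case pairing (which is the case of an involutive $\phi$) and removing the need for any special argument when $r+1$ is odd. With that replacement your computation goes through verbatim and yields the second bound.
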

As we will see,  the constructions underpinning Theorem~\ref{theorem: upper bounds on tau} are very different from the extremal ones for the Rademacher--Tur\'an problem. 
\begin{conjecture}\label{conjecture: tau value}
	The upper bounds on $\tau(\rho)$ given in Theorem~\ref{theorem: upper bounds on tau} are tight for every $\rho\in [0,1]$.
\end{conjecture}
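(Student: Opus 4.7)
The plan is to prove Conjecture~\ref{conjecture: tau value} by combining explicit extremal constructions, a stability statement, and a rigidity argument, in the spirit of the Razborov--Nikiforov--Reiher resolution of the Rademacher--Tur\'an problem but adapted to the max statistic $t_{\max}$ in place of the average triangle count.

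First, I would verify that the upper bound of Theorem~\ref{theorem: upper bounds on tau} is attained asymptotically by an explicit graph $G^*_{r,\rho}$. In the first regime $\rho\in[\frac{r-1}{r},\frac{r}{r+1}-\frac{1}{3r(r+1)}]$ I take the Tur\'an graph $T_r(n)$ and insert inside each part a triangle-free $d$-regular graph with $d=(\rho-\frac{r-1}{r})n+o(n)$. A vertex $v\in V_i$ then picks up $\binom{r-1}{2}(n/r)^2$ triangles with both neighbours in distinct other parts, $(r-1)dn/(2r)$ triangles with both neighbours lying in the same other part (now joined by the inserted intra-part edges), and $d(r-1)(n/r)$ mixed triangles with one neighbour inside $V_i$ and one outside, summing to the conjectured bound. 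In the second regime, a dual construction obtained by deleting a carefully chosen regular bipartite subgraph from $T_{r+1}(n)$ should provide the matching upper bound, with the switch-over at $\rho=\frac{r}{r+1}-\frac{1}{3r(r+1)}$ arising naturally because this is the unique value at which the two constructions coincide (both giving $\tau=\frac{(r-1)^2}{r(r+1)}$, as a short computation confirms).

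Second, I would establish a stability result: any $G$ with density $\rho$ and $t_{\max}(G)\le(\tau(\rho)+o(1))\binom{n-1}{2}$ must be within edit-distance $o(n^2)$ of some $G^*_{r,\rho}$. The cleanest route is through the graph limits or flag algebra framework: the dual certificate underlying the tiny gap $7.4\times 10^{-9}$ of Theorem~\ref{theorem: k4 bounds} should, up to numerical error, pin down a finite family of admissible limit objects, each of which can then be hand-verified to be a blow-up of an extremal template. A rigidity phase follows: given $G$ close to $G^*_{r,\rho}$, local symmetrisation moves (edge-swaps across $4$-cycles, intra-part degree equalisation, balanced blow-up adjustments) either preserve $t_{\max}$ or strictly increase the density, so $G$ itself must essentially coincide with a template.

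The main obstacle is the stability step. The max statistic is considerably more fragile than the average one: adding a single edge can shift $t_{\max}$ by $\Theta(n)$, which breaks the direct analogue of the Razborov--Reiher symmetrisation used for the Rademacher--Tur\'an problem. A reasonable intermediate milestone, before attempting the general conjecture, would be to extend the tripartite ($r=2$) proof the authors already possess to \emph{near-tripartite} graphs (those with only a small number of intra-part edges), in order to isolate which flag algebra identities are doing the work; this should then provide a blueprint for larger $r$ via an inductive blow-up argument, or at least pinpoint where a genuinely new idea is needed.
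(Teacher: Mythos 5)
This statement is Conjecture~\ref{conjecture: tau value}, which the paper does \emph{not} prove: it is left open, and the paper only establishes the tripartite case (Theorem~\ref{theorem: tripartite theorem}) together with numerical flag-algebra evidence on $[\frac12,\frac34]$ (Theorem~\ref{theorem: flag algebra bounds}, Proposition~\ref{prop: pointwise flag algebra bounds}). Your proposal is therefore not comparable to a proof in the paper, and on its own terms it is a programme rather than an argument. Its first step merely re-derives the upper-bound constructions of Theorem~\ref{theorem: upper bounds on tau} (Constructions~\ref{construction: upwards} and~\ref{construction: downwards}); the actual content of the conjecture is the matching \emph{lower} bound on $\tau(\rho)$, which in your plan is carried entirely by the stability and rigidity steps, and neither is carried out.

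The stability step, as formulated, conflicts with an observation the authors themselves make (the remark following Construction~\ref{construction: downwards}): the extremal configurations are not unique, even approximately. The triangle-free graphs $H_i$ inserted inside the parts can be chosen in many inequivalent ways --- bipartite graphs, or subgraphs of blow-ups of $C_5$ when $\rho\le\frac{r-1}{r}+\frac{2}{5r}$ --- producing extremal graphs at edit distance $\Omega(n^2)$ from one another and with genuinely different subgraph frequencies. Hence there is no single template $G^*_{r,\rho}$ (nor a finite family of limit objects determined by subgraph densities) to which near-extremal graphs converge, and a dual flag-algebra certificate cannot ``pin down'' the extremal structure in the way you describe; this is precisely why the authors warn that the problem may not be amenable to standard flag-algebra approaches. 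In addition, the $7.4\times 10^{-9}$ certificate you invoke concerns only the single point $\rho_{\star}$ relevant to $c_1(K_4^{(3)})$, not the whole range of densities, and the known bounds of Theorem~\ref{theorem: flag algebra bounds} leave gaps of order $10^{-3}$--$10^{-2}$, far from forcing structure. So the key idea needed to prove the conjecture --- a lower-bound mechanism that tolerates this instability, as the direct probabilistic argument does in the tripartite case --- is missing from your plan; your suggested intermediate milestone (near-tripartite graphs for $r=2$) is reasonable, but it is a starting point, not a proof.
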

We use flag algebra computations to show the upper bounds from Conjecture~\ref{conjecture: tau value} are not far from optimal when $\rho \in [\frac{1}{2}, \frac{2}{3}]$ (see Theorem~\ref{theorem: flag algebra bounds}).

Following on a beautiful result of Bondy, Shen, Thomass\'e and Thomassen~\cite{BondyShenThomasseThomassen06} on a tripartite version of Mantel's theorem, Baber, Johnson and Talbot~\cite{BaberJohnsonTalbot10} 

gave a tripartite analogue of Razborov's triangle-density result. In a similar spirit, we prove Conjecture~\ref{conjecture: tau value} holds for tripartite graphs. Note that a tripartite graph on $n$ vertices can have between $0$ and $\frac{n^2}{3}$ edges. 
\begin{theorem}
\label{theorem: tripartite theorem}
Let $G$ be a tripartite graph on $n$ vertices. Then
\begin{align*}
t_{\mathrm{max}}(G)\geq \left\{ \begin{array}{ll}
	\frac{3}{2}\left(e(G)-\frac{n^2}{4}\right) & \textrm{if }  \frac{e(G)}{n^2}< \frac{3}{10},\\
	e(G)-\frac{2}{9}n^2 & \textrm{if }   \frac{3}{10}\leq \frac{e(G)}{n^2} \leq \frac{1}{3}.
\end{array}
\right.
\end{align*}
\end{theorem}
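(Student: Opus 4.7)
The plan is to reduce Theorem~\ref{theorem: tripartite theorem} to two triangle-count inequalities, using the averaging identity specific to tripartite graphs. Since every triangle in a tripartite graph contributes exactly one vertex to each part, we have $\sum_{v \in V(G)} t(v) = 3T(G)$, where $T(G)$ denotes the total number of triangles, so $t_{\mathrm{max}}(G) \geq 3T(G)/n$. It therefore suffices to prove that $T(G) \geq \tfrac{n}{2}(e(G) - n^2/4)$ when $e(G)/n^2 < 3/10$, and $T(G) \geq \tfrac{n}{3}(e(G) - 2n^2/9)$ when $3/10 \leq e(G)/n^2 \leq 1/3$. These are tripartite analogues of Moon--Moser-type inequalities, consistent with the extremal configurations $K_{n/2,n/2,0}$ (tight for the first bound at $e(G)=n^2/4$) and $K_{n/3, n/3, n/3}$ (tight for the second bound at $e(G)=n^2/3$).

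The main tool is the inclusion-exclusion inequality: for any $v$ in the part $V_i$, with $\{i,j,k\}=\{1,2,3\}$,
\[
t(v) = e(N(v) \cap V_j, N(v) \cap V_k) \geq e_{jk} - n_j n_k + d_j(v) d_k(v),
\]
since an edge of $E_{jk}$ missing from the link graph $L(v)$ has at least one endpoint outside $N(v)$, and there are at most $n_j n_k - d_j(v) d_k(v)$ such potential edges. For each $i$, picking $v_i^* \in V_i$ maximising $d_j(v)d_k(v)$ and setting $D_i := d_j(v_i^*) d_k(v_i^*)$ yields $t_{\mathrm{max}}(G) \geq e_{jk} - n_j n_k + D_i$. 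Summing the three resulting bounds and invoking $n_1 n_2 + n_1 n_3 + n_2 n_3 \leq n^2/3$ (with equality iff $n_1=n_2=n_3$) gives $3t_{\mathrm{max}}(G) \geq e(G) - n^2/3 + (D_1+D_2+D_3)$. The second bound in the theorem follows from the auxiliary inequality $D_1+D_2+D_3 \geq 2e(G) - n^2/3$, which I would prove by a Cauchy--Schwarz/convexity argument in the regime $e(G)/n^2 \geq 3/10$: here $e(G)$ is close to its maximum $n^2/3$, forcing the vertex degrees into each pair of parts to be substantial on average, so that the best degree-products $D_i$ must in aggregate be large.

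The first bound, corresponding to the low-edge regime, is more delicate: a naive application of the key inequality loses a factor of $3/2$, and the relevant extremal structure is a near-bipartite graph obtained by perturbing $K_{n/2, n/2, 0}$ with a small tripartite addition. My plan is to prove it via a compression/shifting argument, in the spirit of the Bondy--Shen--Thomass\'e--Thomassen proof of the tripartite Mantel theorem~\cite{BondyShenThomasseThomassen06} and its triangle-density refinement by Baber--Johnson--Talbot~\cite{BaberJohnsonTalbot10}: starting from an extremal $G$, one shows that certain local moves decrease $t_{\mathrm{max}}(G)$ weakly while preserving $e(G)$, until $G$ is of canonical form (a blow-up of a small tripartite graph) for which the bound can be checked directly. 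The main obstacle is reconciling the two regimes at the crossover $e(G)/n^2 = 3/10$, where the extremal structure shifts from near-bipartite (with $\min_i n_i$ very small) to near-balanced (all three parts comparable); verifying the piecewise bound without ambiguity will require a case analysis on $\min_i n_i$ relative to a threshold around $n/4$, and carefully matching the two sides of the transition.
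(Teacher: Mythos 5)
Your opening reduction is where the argument breaks. The averaging step $t_{\mathrm{max}}(G)\geq 3T(G)/n$ is correct but too lossy, and the two triangle-count inequalities you reduce to are in fact \emph{false} for tripartite graphs. For the second regime, take the complete tripartite graph with parts of sizes $0.4n,0.4n,0.2n$: then $e(G)=0.32n^2\geq 0.3n^2$, $T(G)=0.032n^3$, but $\frac{n}{3}\left(e(G)-\frac{2}{9}n^2\right)\approx 0.0326n^3$. For the first regime, take parts $A,B$ of size $0.45n$ and $C$ of size $0.1n$, with $A$--$B$ and $A$--$C$ complete and a $0.6$-regular-density bipartite graph between $B$ and $C$: then $e(G)=0.2745n^2<0.3n^2$, $T(G)=0.01215n^3$, but $\frac{n}{2}\left(e(G)-\frac{n^2}{4}\right)=0.01225n^3$. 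In both examples the theorem itself holds comfortably, because the vertex of large triangle-degree sits in the \emph{smallest} part; the crucial point your reduction throws away is that every triangle has exactly one vertex in each part, so $\sum_{c\in C}t(c)=T(G)$ and $t_{\mathrm{max}}(G)\geq T(G)/\vert C\vert$, which beats $3T(G)/n$ precisely when $\vert C\vert$ is small. This averaging over the smallest part is exactly the device the paper uses: it parametrises by the part sizes and the three pairwise densities, writes $\alpha+\beta+\gamma=2+s$ with $s>0$ (forced by $e(G)>n^2/4$), lower-bounds the triangle count by $sxy(1-x-y)n^3$ via a union bound, averages over the smallest part to get $t_{\mathrm{max}}(G)/n^2\geq sx(1-x)/2$, upper-bounds $e(G)/n^2\leq x-x^2+\frac{s}{4}(1-x)^2$, and finishes with elementary calculus split according to $s\gtrless\frac{2}{3}$.

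The later parts of your plan do not repair this. Your inclusion--exclusion bound $t(v)\geq e_{jk}-n_jn_k+d_j(v)d_k(v)$ is fine, but the auxiliary inequality $D_1+D_2+D_3\geq 2e(G)-n^2/3$ is only asserted (``Cauchy--Schwarz/convexity''), and it is false as stated: with parts of sizes $n/2,n/4,n/4$, the large part complete to both small parts and a $0.8$-regular-density bipartite graph between the two small parts, one has $e(G)=0.3n^2$ but $D_1+D_2+D_3=0.2625n^2<2e(G)-n^2/3\approx 0.2667n^2$ (the slack you discarded in replacing $n_1n_2+n_1n_3+n_2n_3$ by $n^2/3$ is what would be needed). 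Finally, the first regime --- which is the harder half --- is not proved at all: the appeal to a Bondy--Shen--Thomass\'e--Thomassen-style compression argument is a programme, not an argument, and you yourself flag the crossover at $e(G)/n^2=3/10$ as unresolved. As it stands the proposal is a sketch built on a false reduction, with both regimes left open.
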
	

\subsection*{Structure of the paper}
In Section~\ref{section: covering} we prove Theorems~\ref{theorem: k4-}--\ref{theorem: k4 bounds} along with bounds for $c_1(C^{(3)}_5)$ and $c_1(K^{(3)}_t)$ for $t\ge 5$. 
In Section~\ref{section:triangle-degree} we prove Theorems~\ref{theorem: upper bounds on tau} and \ref{theorem: tripartite theorem}, and give flag algebra bounds on $\tau(\rho)$. We end the paper in Section~\ref{section: concluding remarks} with a discussion of book numbers in graphs and a comparison of known results and conjectures on minimal triangle density, triangle-degree and book-number as functions of edge density. 
\subsection*{Notation}
We use standard graph and hypergraph theory notation throughout the paper. In addition, we use $[n]$ to denote the set $\{1,2, \ldots n\}$ and $S^{(r)}$ to denote the collection of all $r$-subsets of a set $S$. Where there is no risk of confusion, we identify hypergraphs with their edge-sets.
\section{Covering in $3$-graphs}\label{section: covering}

\subsection{Proof of Theorem~\ref{theorem: k4-}}
Recall that $K_4^{(3)-}$ is the (unique up to isomorphism) $3$-graph on $4$-vertices spanning $3$ edges, also known as the \emph{generalised triangle}. In this subsection, we prove Theorem~\ref{theorem: k4-}.

\begin{proof}[Proof of Theorem~\ref{theorem: k4-}]\

\noindent\textbf{Lower bound:}  let $n$ be odd, and let $d= \lfloor d_{\star}\rfloor \le (n-1)/2$. We construct a $3$-graph $H$ on $n$ vertices as follows. Set aside a vertex $v_{\star}$, and let $A\sqcup B$ be a bipartition of $V(H)\setminus \{v_{\star}\}$ into two sets of equal size. Let $G$ be an arbitrary $d$-regular bipartite graph with partition $A\sqcup B$. Now let $H$ be the $3$-graph whose $3$-edges are the union of the triples $\{v_{\star}xy: \ xy \in E(G) \}$ together with all the triples of vertices from $A\cup B$ inducing at most one edge in $G$.

Clearly, for every triple of vertices $S\subseteq A\cup B$, $S\cup\{v_{\star}\}$ induces at most two edges of $H$ and $v_{\star}$ is not contained in any copy of $K_4^{(3)-}$. Thus $c_1(n, K_4^{(3)-})\geq \delta_1(H)$. This latter quantity is easily calculated: the degree of $v_{\star}$ in $H$ is $\frac{n-1}2 d$. For any $a\in A$, there are exactly $d(d-1)$ pairs $(a', b)\in A\times B$ such that both $a'b$ and $ab$ lie in $G$, and exactly $\binom{d}{2}$ pairs $(b,b')\in B^{(2)}$ such that both $ab$ and $ab'$ lie in $G$; such pairs are the only pairs from $((A\setminus\{a\})\cup B)^{(2)}$ that do not form an edge of $H$ with $a$. In addition, there are exactly $d$ edges of $H$ containing the pair $av_{\star}$. Thus the degree of $a$ in $H$ is
\begin{align*}
\deg(a)&= \binom{n-2}{2}- d(d-1)-\binom{d}{2} + d=f_n(d).
\end{align*}
By symmetry, the degree of any vertex in $B$ is also $f_n(d)$. Thus $\delta_1(H)=\min(\frac{n-1}2 d, f_n(d))= \frac{n-1}2 d$ because $d\le d_{\star}$. Since $H$ has no $K_4^{(3)-}$-covering, it follows that $c_1(n, K_4^{(3)-})\ge \frac{n-1}2 \lfloor d_{\star}\rfloor$.

\noindent	\textbf{Upper bound:} suppose $H$ is a $3$-graph on $n$ vertices with $\delta_1(H)=\frac{n-1}2 d$ and no copy of $K_4^{(3)-}$ covering a vertex $x$ (here $n$ is not necessarily odd). We shall show that $\delta_1(H)\le \frac{n-1}2 d_{\star}$.
Note that the link graph $H_x$ of $x$ is triangle-free. Furthermore,  $v_1v_2v_3\notin E(H)$ for any triple $v_1v_2v_3$ spanning two edges in $H_x$. Let $F(v)$ denote the collection of  pairs $v_2v_3$ such that $vv_2v_3$ induces two edges in $H_x$. We know that $vv_2v_3\notin E(H)$ for every $v_2v_3\in F(v)$. Observe that $F(v)$ consists of all pairs $v_2 v_3$, where either $v_2, v_3\in \Gamma(x, v)$ or $v_2v_3\in H_x$ and exactly one of $v_2$, $v_3$ is in $\Gamma(x,v)$.

Counting non-edges of $H$ over all $v\in V\setminus\{x\}$, we thus have
	\begin{align*}
\sum_{v\in V\setminus\{x\}} \left( \binom{n-1}{2}-\deg(v)\right)
& \geq \sum_{v\in V\setminus\{x\}} n-2 -\deg(x,v)  +\vert F(v)\vert    \\
& \geq \sum_{v\in V\setminus\{x\}} \left( n-2-\deg(x,v)  + \binom{\deg(x,v)}{2}  +\sum_{v_2\in \Gamma(x,v)} (\deg(x, v_2)-1)\right)\\
	&= (n-1)(n-2) + \sum_{v\in V\setminus\{x\}}  \frac{1}{2}\left(3(\deg(x,v))^2-5\deg(x,v)\right)\\
	&\geq (n-1)(n-2)    + \frac{n-1}{2}\left(3d^2 -5d\right)= (n-1)\left( n-2+\frac{3d^2 -5d}{2}\right).
	\end{align*}
	where in the last line we used Jensen's inequality and our minimum degree assumption $\deg(x)\geq \frac{n-1}2 d$. By averaging, there exists a vertex $v\in v\in V\setminus\{x\} $ with 
\begin{align*}
\deg(v)& \leq \binom{n-1}{2}-n+2 -\frac{3d^2-5d}{2}= f_n(d).
\end{align*}	
Applying our minimum degree assumption $\deg(v)\geq \frac{n-1}2 d$ yields $\frac{n-1}2 d\leq f_n(d)$ and hence $d\leq d_{\star}$. Thus $\delta_1(H)\le \frac{n-1}2 d_{\star}$ as claimed.

\end{proof}

\subsection{Proof of Theorem~\ref{theorem: stability}}
Our proof shall make use of a consequence of Karamata's inequality.  Let $a_n\geq a_{n-1} \geq \ldots \geq  a_1$ and $b_n\geq b_{n-1} \geq \ldots \geq b_1$ be real numbers. We say that $\mathbf{a}=(a_n, \ldots, a_1)$ \emph{majorises}  $\mathbf{b}=(b_n, \ldots, b_1)$  if  $\sum_{i\geq k} a_i\geq \sum_{i\geq k} b_i$ for all $1\leq k\leq n$, with equality attained in the case $k=1$.  Karamata's inequality states that if  $\mathbf{a}$ majorises $\mathbf{b}$ and $f$ is a convex function then $\sum_i f(a_i) \geq \sum_i f(b_i)$.
\begin{lemma}\label{lemma: extended Jensen}
	Suppose $f: \ \mathbb{R}\rightarrow \mathbb{R}$ is a convex function. Let $a_1 \leq a_2\leq\ \ldots \leq  a_n$ be real numbers such that $\sum_i a_i = \bar{a} n$, and let $\eta>0$. Set $\mathcal{B}:=\{i: \ a_i \leq (1-\eta)\bar{a}\}$. Then 
	\begin{align}
	\sum_{i} f(a_i)\geq \vert \mathcal{B}\vert \cdot  f\left((1-\eta )\bar{a}\right) + \left(n-\vert \mathcal{B}\vert\right) \cdot f\left( \left(1+\frac{\eta \vert \mathcal{B}\vert}{n-\vert \mathcal{B}\vert}\right) \bar{a}\right).
	\end{align}
\end{lemma}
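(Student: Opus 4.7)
The plan is to deduce the inequality directly from Karamata's majorisation principle, as the preamble to the statement signals. I will introduce a two-valued comparison sequence $\mathbf{b}$ whose $f$-sum is exactly the claimed right-hand side, and then verify that $\mathbf{a}$ majorises $\mathbf{b}$.

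First note that $|\mathcal{B}|<n$, since otherwise $\sum_i a_i\le n(1-\eta)\bar a< n\bar a$ would contradict the hypothesis $\sum_i a_i=n\bar a$. I will define $b_i:=(1-\eta)\bar a$ for $1\le i\le |\mathcal{B}|$ and $b_i:=\bigl(1+\tfrac{\eta|\mathcal{B}|}{n-|\mathcal{B}|}\bigr)\bar a$ for $|\mathcal{B}|<i\le n$; a direct computation gives $\sum_i b_i=n\bar a=\sum_i a_i$, and $\sum_i f(b_i)$ equals the RHS of the lemma. Because $a_1\le\dots\le a_n$, the set $\mathcal B$ is the initial segment $\{1,\dots,|\mathcal B|\}$ of $[n]$, so the $|\mathcal B|$ smallest entries of $\mathbf a$ each satisfy $a_i\le(1-\eta)\bar a$.

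The main step is then to check the partial-sum inequalities of majorisation. Writing $S_j(\mathbf a)$ and $S_j(\mathbf b)$ for the sums of the top $j$ entries in decreasing order, I will split on $j$. For $j\le n-|\mathcal B|$: summing the $|\mathcal B|$ smallest entries of $\mathbf a$ gives $S_{n-|\mathcal B|}(\mathbf a)\ge n\bar a-|\mathcal B|(1-\eta)\bar a=S_{n-|\mathcal B|}(\mathbf b)$, and since the top $n-|\mathcal B|$ entries of $\mathbf b$ are all equal, the averaging bound $S_j(\mathbf a)\ge\tfrac{j}{n-|\mathcal B|}S_{n-|\mathcal B|}(\mathbf a)$ transfers the inequality to every smaller $j$. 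For $j>n-|\mathcal B|$: set $m=j-(n-|\mathcal B|)$ and apply the defining property of $\mathcal B$ to the $|\mathcal B|-m$ smallest entries of $\mathbf a$ to obtain $S_j(\mathbf a)\ge n\bar a-(|\mathcal B|-m)(1-\eta)\bar a$, which matches $S_j(\mathbf b)$ exactly. The totals agree at $j=n$, so $\mathbf a$ majorises $\mathbf b$, and Karamata's inequality applied to the convex $f$ yields $\sum_i f(a_i)\ge\sum_i f(b_i)$, which is the statement. The only delicate point is the second regime, where the partial sums of $\mathbf a$ and $\mathbf b$ meet with no slack; but the matching is forced directly by the defining hypothesis on $\mathcal B$, so the argument is tight but routine.
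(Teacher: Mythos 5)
Your proof is correct and takes essentially the same route as the paper's: the identical two-valued comparison sequence, verification that it has the same total, and an application of Karamata's inequality. The only difference is that you check the majorisation partial-sum inequalities explicitly (splitting at $j=n-\vert\mathcal{B}\vert$) where the paper asserts this ``follows readily'' from comparing the block averages $x$ and $y$; note that both arguments implicitly use $\bar{a}>0$ (e.g.\ in concluding $\vert\mathcal{B}\vert<n$ and in ordering the comparison sequence), which is harmless since the lemma is applied with $\bar{a}=cn>0$.
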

\begin{proof}
	Since $\eta>0$, our assumption on $\sum_i a_i$ tells us that $[n]\setminus \mathcal{B}\neq \emptyset$. If $\mathcal{B}=\emptyset$, then the claimed inequality is just Jensen's inequality. So assume $\mathcal{B}$ is nonempty and set $\vert \mathcal{B}\vert =\beta n$ for some $\beta>0$.

Let $a'_1, a'_2, \ldots , a'_n$ be given by 
\[a'_i=\left\{ \begin{array}{ll}
(1-\eta)\bar{a} & \textrm{if }i\in [\beta n]\\
\left(1+ \frac{\eta\beta}{1-\beta}\right)\bar{a} & \textrm{if }i\in [n]\setminus [\beta n].\\
\end{array}\right.\]
Observe that $\sum_i a'_i=\sum_i a_i=\bar{a}n$. Setting
\begin{align*}
x =\frac{1}{\vert \mathcal{B}\vert}\sum_{i \in \mathcal{B}}a_i \quad  \textrm{and} \quad  y =\frac{1}{n-\vert \mathcal{B}\vert}\sum_{i \in [n]\setminus\mathcal{B}}a_i,
	\end{align*}
we have 
\begin{align*}
x\leq (1-\eta)\bar{a}< \left(1+\frac{\eta \beta}{1-\beta}\right)\bar{a}\leq\frac{\bar{a} -\beta x}{1-\beta} =y.
\end{align*}
It follows readily from this that the $n$-tuple $(a_n, a_{n-1}, a_{n-2}, \ldots , a_1)$ majorises $(a'_n, a'_{n-1}, a'_{n-2}\ldots , a'_1)$. Applying Karamata's inequality to the convex function $f$ we obtain
\begin{align*}
\sum_i f(a_i)\geq  \sum_i f(a'_i)=\beta n \cdot f\left((1-\eta )\bar{a}\right) + \left(1-\beta\right) n\cdot f\left(\left(1+\frac{\eta \beta}{1-\beta}\right)\bar{a}\right).&&\qedhere
\end{align*}

\end{proof}
Another ingredient in the proof of Theorem~\ref{theorem: stability} is a classical result of Andr\'asfai, Erd{\H o}s and S\'os.
\begin{theorem}[Andr\'asfai, Erd{\H o}s, S\'os~\cite{AndrasfaiErdosSos74}]\label{proposition: Andrasfai Erdos Sos ]}
	Let $G$ be a triangle-free graph on $n$ vertices with minimum degree $\delta(G)>\frac{2n}{5}$. Then $G$ is bipartite.
\end{theorem}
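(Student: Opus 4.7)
The plan is to argue by contradiction, combining the minimality of a shortest odd cycle with an edge-counting argument between the cycle and its complement. Assume $G$ is triangle-free with $\delta(G) > 2n/5$, and suppose for contradiction that $G$ is not bipartite. Then $G$ contains an odd cycle; let $C = v_0 v_1 \cdots v_{2k} v_0$ be a \emph{shortest} such cycle. Triangle-freeness forces $2k+1 \geq 5$, hence $k \geq 2$. The minimality of $C$ immediately rules out chords, since any chord $v_i v_j$ would partition $C$ into two strictly shorter cycles, one of which would be odd.

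The key structural step is to control how an external vertex $u \in V(G) \setminus V(C)$ attaches to $C$: the claim is that $u$ has at most two neighbors on $C$, and if it has exactly two, they are at cyclic distance exactly $2$ on $C$. To see this, suppose $u$ is adjacent to $v_i$ and $v_j$ with cyclic distance $d$, where $1 \leq d \leq k$. The two arcs of $C$ joining $v_i$ and $v_j$, together with the path $v_i u v_j$, form two cycles of lengths $d+2$ and $2k+3-d$; since their lengths sum to $2k+5$ (odd), exactly one of these cycles is odd. By the minimality of $C$, the odd one has length at least $2k+1$. A quick parity split finishes it: if $d$ is odd, the $(d+2)$-cycle is odd, forcing $d \geq 2k-1$, which combined with $d \leq k$ gives $k \leq 1$, contradicting $k \geq 2$; hence $d$ is even, and the $(2k+3-d)$-cycle is the odd one, yielding $d \leq 2$ and so $d = 2$. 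To upgrade ``every pair of neighbors lies at cyclic distance $2$'' to ``at most two neighbors in total'', one checks that in $C_{2k+1}$ with $k \geq 2$ no three vertices are pairwise at cyclic distance $2$: the vertices at cyclic distance $2$ from $v_i$ are exactly $v_{i-2}$ and $v_{i+2}$, and $v_{i-2}, v_{i+2}$ themselves are at cyclic distance $\min(4, 2k-3) \neq 2$ for $k \geq 2$.

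A double count of edges between $V(C)$ and $V(G) \setminus V(C)$ then completes the argument. Each $v_i \in V(C)$ has exactly two neighbors inside $C$ (its cycle neighbors, by the absence of chords), and more than $2n/5$ neighbors in total, hence more than $2n/5 - 2$ neighbors outside $C$. Summing gives strictly more than $(2k+1)(2n/5 - 2)$ edges crossing between $V(C)$ and its complement. On the other hand, the structural claim bounds the contribution of each of the $n - (2k+1)$ external vertices by $2$, giving at most $2(n - 2k - 1)$ crossing edges. Combining the two bounds, the terms $-2(2k+1)$ cancel on both sides, leaving $(2k+1) \cdot (2n/5) < 2n$, i.e.\ $2k+1 < 5$, which contradicts $k \geq 2$.

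The main obstacle is the structural claim in the second paragraph: the parity argument showing that $u$'s neighbors on $C$ must lie at cyclic distance exactly $2$ is where the minimality of the shortest odd cycle is genuinely exploited, and the small-cycle verification ruling out three such pairwise-distance-$2$ vertices is needed to close the gap between ``pairwise distance $2$'' and ``at most two''. Once this is established, the counting step is a routine inclusion of $V(C)$ into the bipartition $V(C) \sqcup (V \setminus V(C))$.
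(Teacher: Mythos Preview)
Your proof is correct and is essentially the classical argument. Note, however, that the paper does not give its own proof of this statement: it is quoted as a known result of Andr\'asfai, Erd{\H o}s and S\'os~\cite{AndrasfaiErdosSos74} and used as a black box in the proof of Theorem~\ref{theorem: stability}. There is therefore no ``paper's proof'' to compare against; what you have written is (a clean rendition of) the original shortest-odd-cycle argument, and all the steps --- the chordlessness of $C$, the cyclic-distance-$2$ constraint on external neighbours, the check that no three vertices of $C_{2k+1}$ are pairwise at cyclic distance $2$ for $k\geq 2$, and the final double count --- are sound.
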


\noindent With these two preparatory results in hand, the proof of Theorem~\ref{theorem: stability} is straightforward: we first use Lemma~\ref{lemma: extended Jensen} to show that the overwhelming majority of vertices in  the link graph $H_x$ have degree much larger than $\frac{2}{5}n$, whereupon we deduce from the Andr\'asfai--Erd{\H o}s--S\'os theorem that $H_x$ is almost bipartite.
\begin{proof}[Proof of Theorem~\ref{theorem: stability}]
	Recall $c_{\star}=\frac{\sqrt{13}-1}{6}=0.43\ldots >\frac{2}{5}$. Fix $\varepsilon>0$. Without loss of generality, assume that $\varepsilon <  \frac{1}{3}\left(c_{\star}-\frac{2}{5}\right)$. Pick $0< \eta < \frac{1}{3c_{\star}}\left(c_{\star}-\frac{2}{5}\right)$ and $\delta>0$ such that
	\begin{align}\label{eq: assumptions on delta}
	 \delta<\frac{1}{3}\left(c_{\star}-\frac{2}{5}\right)<\frac{c_{\star}}{6} &&\textrm{and}&&\left(\frac{1+ 6 c_{\star}}{2 c_{\star}^2 \eta^2} \right)\delta <\frac{\varepsilon}{2}	\end{align}
	both hold.

	Let $H$ be a $3$-graph with $v(H)=n+1$, $\delta_1(H)\geq \left(c_{\star}-\delta\right)\frac{n^2}{2}$. Suppose $x$ is a vertex in $H$ not covered by any copy of $K_4^{(3)-}$. Without loss of generality, assume $V(H)=[n]\cup\{x\}$. By the vertex-degree assumption, $e(H_x)=cn^2$, for some $c\geq c_{\star}-\delta$.  Let $\mathcal{B}=\left\{y\in V(H): \ \mathrm{deg}(xy)\leq c(1-\eta)n \right\}$ be the collection of vertices in $H$ whose codegree with $x$ is smaller than average by a multiplicative factor of $(1-\eta)$. Set $\vert \mathcal{B}\vert =\beta n$.

	Since $x$ is not covered by a copy of $K_4^{(3)-}$ in $H$, the following hold:
	\begin{enumerate}[(i)]
		\item $H_x$ is triangle-free;
		\item for every triple of vertices $\{y_1, y_2, y_3 \}$ inducing two edges in $H_x$, the $3$-edge $y_1y_2y_3$ is missing from $E(H)$.
	\end{enumerate}
	Property (i) implies that for every $y\in [n]$, the neighbourhood $\Gamma(xy)$ is an independent set in $H_x$, while property (ii) implies that for every $z,z' \in H_{xy}$ and every $w\in H_{xz}$, the $3$-edges $zz'y$ and $zwy$ are both missing from $E(H)$. In  particular for every $y \in [n]$, we have
	\begin{align*}
	(1-c_{\star}+\delta)\frac{n^2}{2}>\binom{n}{2}-e(H_y)\geq \binom{\vert H_{xy}\vert }{2}+\sum_{z\in H_{xy}} \left(\vert H_{xz}\vert -1\right).	
	\end{align*}
	Summing this inequality over all $y \in [n]$ and using the fact $\sum_{y\in [n]} \sum_{z\in H_{xy}} (|H_{xz}| - 1) = 2 \sum_{y\in [n]} \binom{ |H_{xy}| }{2}$, we get
	\begin{align}\label{eq: average bound on the degrees}
	(1-c_{\star}+\delta)\frac{n^3}{2}> \sum_{y\in [n]} 3\binom{\vert H_{xy}\vert}{2}.
	\end{align}
	Since the function $f(t)= \binom{t}{2}$ is convex and $\sum_{y\in [n]} \vert H_{xy}\vert =2 \vert H_x\vert =cn^2$, we can apply Lemma~\ref{lemma: extended Jensen} to bound below the right-hand side of (\ref{eq: average bound on the degrees}) by
	\begin{align}
	3 \left(\beta n \binom{c(1-\eta)n}{2} + (1-\beta )n \binom{\frac{cn^2-\beta n \cdot c(1-\eta)n} {(1-\beta)n }}{2}\right)=\frac{3c^2}{2}\left(\beta (1-\eta)^2+ \frac{\left(1-\beta(1-\eta)\right)^2}{1-\beta}  \right)n^3 + O(n^2). \notag
	\end{align}
	Inserting this inequality back into (\ref{eq: average bound on the degrees}), dividing through by $n^3$ and using $c\geq c_{\star}-\delta$ yields
	\begin{align}
		1-c_{\star}+\delta &\geq 3 (c_{\star}-\delta)^2\left(\beta (1-\eta)^2+ \frac{\left(1-\beta(1-\eta)\right)^2}{1-\beta}  \right)+O(n^{-1})\notag\\
		&= 3 (c_{\star}-\delta)^2\left(1+\eta^2 \beta + \frac{\eta^2\beta^2}{1-\beta}\right) +O(n^{-1})\\
		&>  (3c_{\star}^2-6\delta c_{\star})\left(1+\eta^2 \beta \right) +O(n^{-1}) \notag\\
		&\ge 3c_{\star}^2- 6\delta c_{\star} + 2c_{\star}^2 \eta^2 \beta + O(n^{-1}),\label{eq: simplifying}
	\end{align}
	where the last inequality holds because our choice of $\delta$ in \eqref{eq: assumptions on delta} ensures $\delta<c_{\star}/6$.
	Note that $c_{\star}$ satisfies $1-c_{\star}=3c_{\star}^2$. Rearranging terms in inequality (\ref{eq: simplifying})  gives
	\begin{align*}
	(1+ 6c_{\star}) \delta > 2c_{\star}^2 \eta^2 \beta + O(n^{-1}).
	\end{align*}
	By the second part of \eqref{eq: assumptions on delta} and the assumption that $n$ is sufficiently large, we have 
	\[
	\beta < \left(\frac{1+ 6 c_{\star}}{2 c_{\star}^2 \eta^2}\right)\delta + O(n^{-1}) <\frac{\varepsilon}{2}+O(n^{-1})< \varepsilon
	\]
	and $\vert \mathcal{B}\vert =\beta n< \varepsilon n$. Remove from $H_x$ all vertices from $\mathcal{B}$. By the definitions of $\delta, \eta, \varepsilon$, the resulting triangle-free graph $G$ has at most $n$ vertices and minimum degree at least 
	\begin{align*}
	c(1-\eta)n -\varepsilon n\geq (c_{\star}-\delta)(1-\eta)n -\varepsilon n>  \left(c_{\star}-\eta c_{\star}  -\delta -\varepsilon \right)n>  \frac{2}{5}n.
	\end{align*}
	By Theorem~\ref{proposition: Andrasfai Erdos Sos ]}, $G$ is bipartite. Since we removed only at most $\varepsilon n$ vertices from $H_x$ to obtain $G$, it follows that $H_x$ can be made bipartite by removing at most $\varepsilon n^2$ edges, as claimed. This concludes the proof of Theorem~\ref{theorem: stability}.

\end{proof}


\subsection{Proof of Theorem~\ref{theorem: k4 bounds}}
\label{subsection: K4}

Given an $r$-graph $G$, write $t_G(x)$ for the number of copies of $K_{r+1}^{(r)}$ in $G$ that cover $x$.
\begin{proposition}\label{proposition:  r-unif K(r+1) covering threshold and (r-1)-unif Kr degree}
There exists an $r$-graph $H$ on $n+1$ vertices with minimum vertex-degree $\delta_1(H)\geq \alpha \binom{n-1}{r-1}$ and no $K_{r+1}^{(r)}$-covering if and only if there exists an $(r-1)$-graph $G$ on $n$ vertices with at least $\alpha \binom{n-1}{r-1}$ edges such that for every vertex $x\in V(G)$, $t_G(x)-\deg_G(x)\leq (1-\alpha)\binom{n-1}{r-1}$.	
\end{proposition}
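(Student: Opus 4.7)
The plan is to establish each direction via the link-graph correspondence at the uncovered vertex.

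\textbf{Forward direction.} Suppose $H$ satisfies the hypotheses, and let $x \in V(H)$ be a vertex not covered by any $K_{r+1}^{(r)}$. Set $G := H_x$, an $(r-1)$-graph on the $n$-vertex set $V(H)\setminus\{x\}$, so $e(G) = \deg_H(x) \geq \delta_1(H) \geq \alpha\binom{n-1}{r-1}$. The crucial observation is the following: if $S \subseteq V(G)$ is an $r$-set whose $(r-1)$-subsets are \emph{all} edges of $G$ (i.e.\ $S$ spans a copy of $K_r^{(r-1)}$ in $G$), then $S \notin E(H)$, for otherwise $\{x\}\cup S$ would form a $K_{r+1}^{(r)}$ covering $x$. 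Now fix an arbitrary vertex $y \in V(G)$ and partition the $r$-edges of $H$ through $y$ according to whether they contain $x$: those that do are in bijection with the edges of $G$ through $y$ and so number $\deg_G(y)$; those that do not are $r$-subsets of $V(G)$ containing $y$, of which there are $\binom{n-1}{r-1}$ in total, and at most $\binom{n-1}{r-1}-t_G(y)$ can be edges of $H$ by the observation. Hence
\[ \alpha\binom{n-1}{r-1} \;\leq\; \deg_H(y) \;\leq\; \deg_G(y) + \binom{n-1}{r-1} - t_G(y), \]
which rearranges to $t_G(y) - \deg_G(y) \leq (1-\alpha)\binom{n-1}{r-1}$, as required.

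\textbf{Backward direction.} Given $G$ on a vertex set $V$ satisfying the hypotheses, introduce a new vertex $x$ and define $H$ on $V \cup \{x\}$ by declaring its $r$-edges to be (i) all sets $\{x\}\cup e$ with $e \in E(G)$, together with (ii) all $r$-subsets of $V$ that do \emph{not} span a $K_r^{(r-1)}$ in $G$. Then $\deg_H(x) = e(G) \geq \alpha\binom{n-1}{r-1}$, while for any $y \in V$ the same partitioning gives
\[ \deg_H(y) \;=\; \deg_G(y) + \binom{n-1}{r-1} - t_G(y) \;\geq\; \alpha\binom{n-1}{r-1}, \]
using the assumed inequality on $t_G(y) - \deg_G(y)$. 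Finally, any $K_{r+1}^{(r)}$ through $x$ in $H$ would contain an $r$-set $S \subseteq V$ all of whose $(r-1)$-subsets are edges of $G$; but by construction no such $S$ is an edge of $H$, so $x$ is not covered, and $H$ has no $K_{r+1}^{(r)}$-covering.

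The entire argument is essentially a degree-counting exercise, and I do not expect any serious technical obstacle. The only step with real content is the structural observation identifying the ``$r$-sets forbidden from $E(H)$ if $x$ is to remain uncovered'' with the $K_r^{(r-1)}$-copies in the link $G$; once this translation is in place, both directions are obtained by the same partition of $\deg_H(y)$ into ``edges through $x$'' and ``edges inside $V(G)$''.
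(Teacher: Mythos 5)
Your proposal is correct and follows essentially the same argument as the paper: both directions use the link graph at the uncovered vertex, the observation that copies of $K_r^{(r-1)}$ in the link must be non-edges of $H$, and the same partition of $\deg_H(y)$ into edges through $x$ and $r$-sets inside $V(G)$. The construction in the backward direction is identical to the paper's, so there is nothing to add.
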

\begin{proof}
	In one direction, let $H$ be an $r$-graph on $n+1$ vertices with minimum degree $\alpha \binom{n-1}{r-1}$. Suppose $v_{\star}$ is not covered by any $K_{r+1}^{(r)}$ in $H$.  By the minimum degree condition on $v_{\star}$, the $(r-1)$-uniform link graph $G=H_{v_{\star}}$  contains at least $\alpha \binom{n-1}{r-1}$ edges. Also, every copy of $K_r^{(r-1)}$ in the $(r-1)$-graph $G$ must be a non-edge in the $r$-graph $H$, else together with $v_{\star}$ it would make a copy of $K_{r+1}^{(r)}$ in $H$ covering $v_{\star}$. The minimum degree condition in $H$ then implies that for every vertex $x$ in the $n$-vertex $(r-1)$-graph $G$,
	\begin{align*}
	\alpha \binom{n-1}{r-1}\leq \deg_H(x)\leq \binom{n-1}{r-1}+\deg_G(x)- t_G(x),
	\end{align*}
	implying $t_G(x)-\deg_G(x)\leq (1-\alpha)\binom{n-1}{2}$ as desired.

	In the other direction, let $G$ be an $(r-1)$-graph on $n$ vertices with at least $\alpha \binom{n-1}{r-1}$ edges such that $t_G(x)-\deg_G(x)\leq (1-\alpha)\binom{n-1}{r-1}$  for all $x\in V(G)$. We add a new vertex $v_{\star}$ to $G$ and define an $r$-graph $H$ on $V(G)\sqcup \{v_{\star}\}$ by setting the link graph of $v_{\star}$ be equal to $G$, and adding in as edges all $r$-sets from $V(G)^{(r)}$ which do not induce a copy of $K_r^{(r-1)}$ in $G$. This yields an $r$-graph on $n+1$ vertices in which $v_{\star}$ is not covered by a copy of $K_{r+1}^{(r)}$, $\deg_H(v_{\star})= e(G)\geq \alpha \binom{n-1}{r-1}$, and for every $x\in V(H)\setminus \{v_{\star}\}$, 
	\[\deg_H(x)= \binom{n-1}{r-1}-t_G(x)+\deg_G(x)\geq \alpha \binom{n-1}{r-1},\]
	so $\delta_1(H)\geq \alpha \binom{n-1}{r-1}$ as desired.
\end{proof}

\begin{corollary}\label{corollary: covering density for complete r-graphs on r+1 vertices}
		For any $r\in \mathbb{N}$, the $1$-degree covering density $c_1(K_{r+1}^{(r)})$ is the least $\alpha>0$ such that if $G$ is an $(r-1)$-graph on $n$ vertices with at least $\alpha \binom{n}{r-1}$ edges, then there is a vertex $x\in G$ contained in $t_G(x)\geq \left(1-\alpha+o(1)\right)\binom{n-1}{r-1}$ copies of $K_r^{(r-1)}$ in $G$. 
\end{corollary}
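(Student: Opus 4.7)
The plan is to deduce this corollary directly from Proposition~\ref{proposition:  r-unif K(r+1) covering threshold and (r-1)-unif Kr degree} by passing to asymptotic densities. The key elementary observation is that in any $(r-1)$-graph $G$ on $n$ vertices,
\[
\deg_G(x)\leq \binom{n-1}{r-2}=o\!\left(\binom{n-1}{r-1}\right) \qquad\text{and}\qquad \binom{n}{r-1}=(1+o(1))\binom{n-1}{r-1},
\]
so both the $\deg_G(x)$ term appearing in Proposition~\ref{proposition:  r-unif K(r+1) covering threshold and (r-1)-unif Kr degree} and the discrepancy between the two binomial coefficients are swallowed by the $o(1)$ in the corollary's statement. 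I would prove the two matching inequalities between $c_1(K_{r+1}^{(r)})$ and the claimed threshold.

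For the lower direction, fix any $\alpha<c_1(K_{r+1}^{(r)})$. By the definition of $c_1$, for all sufficiently large $n$ there is an $r$-graph $H$ on $n+1$ vertices with $\delta_1(H)\geq \alpha\binom{n-1}{r-1}$ and some vertex $v_{\star}$ lying in no copy of $K_{r+1}^{(r)}$. The ``only if'' direction of Proposition~\ref{proposition:  r-unif K(r+1) covering threshold and (r-1)-unif Kr degree} applied to $v_{\star}$ produces an $(r-1)$-graph $G=H_{v_{\star}}$ on $n$ vertices with $e(G)\geq \alpha\binom{n-1}{r-1}=(\alpha-o(1))\binom{n}{r-1}$ and, for every $x\in V(G)$,
\[
t_G(x)\leq \deg_G(x)+(1-\alpha)\binom{n-1}{r-1}\leq (1-\alpha+o(1))\binom{n-1}{r-1}.
\]
Such a $G$ is a witness that the corollary's condition fails at any density strictly below $c_1(K_{r+1}^{(r)})$, showing that the least admissible $\alpha$ is at least $c_1(K_{r+1}^{(r)})$.

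Conversely, fix any $\alpha>c_1(K_{r+1}^{(r)})$ and suppose, toward contradiction, that for arbitrarily large $n$ there is an $(r-1)$-graph $G$ on $n$ vertices with $e(G)\geq \alpha\binom{n}{r-1}\geq \alpha\binom{n-1}{r-1}$ but $t_G(x)\leq (1-\alpha)\binom{n-1}{r-1}$ for every $x$. Since $\deg_G(x)\geq 0$, this gives $t_G(x)-\deg_G(x)\leq (1-\alpha)\binom{n-1}{r-1}$ for every $x$, so the ``if'' direction of Proposition~\ref{proposition:  r-unif K(r+1) covering threshold and (r-1)-unif Kr degree} yields an $r$-graph $H$ on $n+1$ vertices with $\delta_1(H)\geq \alpha\binom{n-1}{r-1}$ and no $K_{r+1}^{(r)}$-covering. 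Passing to the limit in the definition of $c_1$ forces $c_1(K_{r+1}^{(r)})\geq \alpha$, contradicting the choice of $\alpha$.

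The whole argument is routine bookkeeping once Proposition~\ref{proposition:  r-unif K(r+1) covering threshold and (r-1)-unif Kr degree} is in hand; the only point one has to be mildly careful about is tracking the $o(1)$ slack so that the proposition can be invoked with compatible parameters in both directions, which is precisely what the two estimates displayed above permit.
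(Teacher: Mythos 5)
Your proposal is correct and matches the paper's intent: the corollary is stated as an immediate consequence of Proposition~\ref{proposition:  r-unif K(r+1) covering threshold and (r-1)-unif Kr degree} (the paper gives no separate proof), and your two-directional translation, using $\deg_G(x)\leq\binom{n-1}{r-2}=o\bigl(\binom{n-1}{r-1}\bigr)$ and $\binom{n}{r-1}=(1+o(1))\binom{n-1}{r-1}$ to absorb the discrepancies, is exactly the intended routine limiting argument.
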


\begin{proof}[Proof of Theorem~\ref{theorem: k4 bounds}]\

	\noindent\textbf{Lower bound:}  suppose $3\vert n$ and partition $[n]$ into three sets $V_1, V_2, V_3$ of size $n/3$. Further partition each $V_i$ into two sets $V_{i,1}$ and $V_{i,2}$ of size $n/6$. Now let $G$ be the $2$-graph on $[n]$ obtained by putting in all edges of the form $V_iV_j$, with $1\leq i < j \leq 3$ and adding for each $i\in [3]$ an arbitrary $n/27$-regular bipartite graph with partition $V_{i,1}\sqcup V_{i,2}$.  An easy calculation shows $G$ is both regular and triangle-degree regular, with every vertex $x$ satisfying $\deg(x)= 19n/27$ and  $t(x)=4 n^2/27$. We have thus $t(x)-\deg(x)= \frac{8}{27}\binom{n -1}{2}+O(n)$. It follows from Proposition~\ref{proposition:  r-unif K(r+1) covering threshold and (r-1)-unif Kr degree} that there exists a $3$-graph $H$ on $n+1$ vertices with minimum degree $\left(\frac{19}{27}+O(\frac{1}{n}) \right)\binom{n-1}{2}$ and no $K_4$-covering, establishing the desired lower bound on $c_1(K_4^{(3)})$.

	\noindent \textbf{Upper bound:} set $\alpha = \frac{19}{27} + 7.4\times 10^{-9}$. By Proposition~\ref{proposition:  r-unif K(r+1) covering threshold and (r-1)-unif Kr degree}, it is enough to show that if $G$ is an $n$-vertex graph with $t_{\mathrm{max}} \leq \left(1-\alpha+o(1)\right)\binom{n-1}{2}$, then $e(G)\leq \left(\alpha +o(1)\right)\binom{n-1}{2}$. This is done in Proposition~\ref{prop: pointwise flag algebra bounds} in the next section via a simple flag algebra calculation. 
\end{proof}

\subsection{$C_5^{(3)}$}\label{subsection: c5}

\begin{theorem}\label{theorem: c5 bounds}
	$0.55\ldots=\frac{5}{9}\leq c_1(C_5^{(3)})\leq 2-\sqrt{2}=0.58\ldots$
\end{theorem}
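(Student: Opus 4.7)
Both bounds hinge on the following elementary observation: a vertex $x$ in a $3$-graph $H$ is covered by a copy of $C_5^{(3)}$ if and only if the link graph $G := H_x$ contains a path $a$-$b$-$c$-$d$ of length $3$ whose two diagonal triples $\{a, b, d\}$ and $\{a, c, d\}$ are both edges of $H$. Equivalently, at the pair level: for every pair $\{a,d\} \subseteq V(H)\setminus \{x\}$, the codegree neighborhood $N_H(\{a,d\}) := \{y : \{a,d,y\} \in E(H)\}$ must form an independent set in the bipartite graph $B(a,d)$ on $(N_G(a) \cup N_G(d)) \setminus \{a, d\}$ whose edges are those $\{b,c\} \in E(G)$ with $b \in N_G(a) \setminus \{d\}$ and $c \in N_G(d) \setminus \{a\}$.

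\emph{Lower bound.} I would construct explicitly an $n$-vertex $3$-graph $H$ with a distinguished vertex $v_\star$ that is not covered by any copy of $C_5^{(3)}$ and has minimum vertex-degree close to $\tfrac{5}{9}\binom{n-1}{2}$. A natural template is to fix a partition $V(H) \setminus \{v_\star\} = A \sqcup B$ in some ratio, let $G = H_{v_\star}$ be a carefully chosen graph (the complete bipartite graph $K_{A,B}$, possibly augmented with intra-part edges to push $\deg(v_\star) = e(G)$ up), and include in $H$ all triples in $V(H) \setminus \{v_\star\}$ except those of a designated ``forbidden'' type whose removal ensures that every $P_4$ in $G$ has a witness non-edge. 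The degree of $v_\star$ equals $e(G)$, while the degree of a vertex in $A$ (or $B$) decomposes as its codegree with $v_\star$ plus its count of permitted triples in $V(H) \setminus \{v_\star\}$. Balancing these three expressions through the choice of the partition ratio and the intra-part edge density should yield the ratio $5/9$.

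\emph{Upper bound.} Suppose $x$ is uncovered and write $e(G) = \alpha \binom{n-1}{2}$ with $\alpha \geq \delta_1(H)/\binom{n-1}{2}$. The plan is to double-count ordered paths of length $3$ in $G$. On the one hand, convexity and the Cauchy--Schwarz bound $\sum_{uv \in E(G)} d_u d_v \geq (\sum_v d_v^2)^2/(4 e(G))$, together with $\sum_v d_v^2 \geq (2 e(G))^2 / n$, give at least $\alpha^3 n^4 - o(n^4)$ ordered $P_4$'s in $G$. On the other hand, the uncovering condition says each ordered $P_4$ $a$-$b$-$c$-$d$ must yield a non-edge of $H$ (either $\{a,b,d\}$ or $\{a,c,d\}$). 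Grouping the contributions by the pair $\{b, d\}$ (respectively $\{a, c\}$) and combining the lower bound $\sum_{\{b, d\}} c_H(\{b,d\}) \geq (1 - o(1))\alpha n^3$ coming from the degree hypothesis with the lower bound $\sum_{\{b, d\}} |N_G(b) \cap N_G(d)| \geq \tfrac{1}{2}\alpha^2 n^3 - o(n^3)$ coming from convexity applied to $G$'s degrees, one obtains via Cauchy--Schwarz an upper bound of the form $2\alpha(1-\alpha)^2 n^4 + o(n^4)$ on the $P_4$-count. Combining the two bounds gives asymptotically $\alpha^3 \leq 2\alpha (1-\alpha)^2$, equivalently $\alpha \leq \sqrt{2}\,(1-\alpha)$, which rearranges to $\alpha \leq 2 - \sqrt{2}$.

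\emph{Main obstacle.} Obtaining exactly the constant $2\alpha(1-\alpha)^2$ in the upper bound on the $P_4$-count is what produces the tight value $2 - \sqrt{2}$; crude applications of Cauchy--Schwarz yield only weaker thresholds. The tightness will require exploiting both witness positions ($b$ and $c$) of each $P_4$ symmetrically and possibly applying the extended Jensen inequality (Lemma~\ref{lemma: extended Jensen}) to restrict to vertices of near-average degree in $G$. The lower-bound construction, once the right partition ratio and intra-part structure are identified, is a direct numerical optimisation.
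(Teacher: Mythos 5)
Your opening characterisation is correct and is the same starting point the paper uses: $x$ is covered by a copy of $C_5^{(3)}$ exactly when the link $H_x$ contains a path $a$-$b$-$c$-$d$ whose two diagonal triples $\{a,b,d\}$ and $\{a,c,d\}$ are edges of $H$. But neither bound is actually established. For the lower bound you offer only a template, and the template points the wrong way: a complete bipartite link gives $\deg(v_\star)\le \frac{n^2}{4}$, well below $\frac{5}{9}\binom{n-1}{2}$, so you would have to add many intra-part edges, and then every length-$3$ path in the link using them needs a witness non-edge among the crossing triples, which in turn lowers the degrees of the other vertices; you never carry out the balancing, and it is not clear your template can reach $5/9$. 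The construction that works is essentially the complement of what you propose: on $3n+1$ vertices, take the link of $v_\star$ to be the \emph{disjoint union} of a clique on an $n$-set $A$ and a clique on a $2n$-set $B$, and let the edges not containing $v_\star$ be exactly the triples meeting both $A$ and $B$. Then every path of length $3$ in the link lies entirely inside $A$ or inside $B$, so both of its diagonal triples are automatically non-edges of $H$, and the $1{:}2$ ratio makes $\deg(v_\star)=\binom{n}{2}+\binom{2n}{2}$ equal to the smallest degree elsewhere, namely $\frac{5}{9}\binom{3n}{2}-\frac{2n}{3}$.

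The upper bound has a genuine gap at precisely the step you flag as the ``main obstacle'': the bound $2\alpha(1-\alpha)^2 n^4$ on the ordered $P_4$-count is never derived, and it is hard to see how it could follow from the tools you list. Each path needs only \emph{one} witness non-edge of $H$ among its two diagonals, and the vertex-degree hypothesis bounds the total number of non-edges of $H$ by roughly $\frac{(1-\alpha)n^3}{6}$; grouping paths by their witness triple therefore yields an upper bound that is linear in $(1-\alpha)$, not quadratic --- nothing in your argument multiplies two non-edge densities, so the inequality $\alpha^2\le 2(1-\alpha)^2$ does not emerge, and Lemma~\ref{lemma: extended Jensen} does not repair this. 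Note also that you work with the single link $H_x$, whereas the constant $2-\sqrt{2}$ in the paper comes from adapting Mubayi and R\"odl's proof that $\pi(C_5^{(3)})\le 2-\sqrt{2}$ \cite{MubayiRodl02}: one takes the uncovered vertex $x$, finds by averaging a vertex $y$ with $\deg(xy)\ge \alpha n$, forms the multigraph $H_x\cup H_y$, and applies their multigraph lemma to force a copy of $C_5^{(3)}$ covering the pair $xy$ or a vertex of too-small degree. Without that two-link multigraph machinery (or a genuinely new counting inequality in its place), your upper-bound argument does not go through.
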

\begin{proof} 
	
	\noindent \textbf{Lower bound: } we construct a $3$-graph on $[3n+1]$ as follows. Set aside $v_{\star}=3n+1$, and partition the remaining vertices into an $n$-set $A$ and a $2n$-set $B$. Let $H$ be the $3$-graph on $[3n+1]$ obtained by setting the link graph of $v_{\star}$ to be the union of a clique on $A$ and a clique on $B$, and adding all triples of the form $AAB$ or $ABB$. Every path of length $3$ in the link graph of $v_{\star}$ gives rise to an independent set in $H$, hence there is no copy of the strong $5$-cycle $C_5$ covering $v_{\star}$ in $H$. The degree of $v_{\star}$ in $H$ is $\binom{n}{2}+\binom{2n}{2}= \frac{5}{9}\binom{3n}{2}-\frac{2n}{3}$, and the degree in the rest of the graph are all at least
	\begin{align*}
	\min\left( \left(\vert A\vert -1\right)\vert B\vert + \binom{\vert B\vert }{2}, \left(\vert B\vert -1\right)\vert A\vert + \binom{\vert A\vert }{2}\right)= n(2n-1)+\binom{n}{2}= \frac{5}{9}\binom{3n}{2}-\frac{2n}{3}. 
	\end{align*}	
	Thus $c_1(3n+1, C_5)\geq \frac{5}{9}\binom{3n}{2}-\frac{2n}{3}$, as desired.

\noindent\textbf{Upper bound: } Mubayi and R\"odl \cite[Theorem 1.9]{MubayiRodl02} proved that $\pi(C_5^{(3)})\leq 2-\sqrt{2}$. An easy modification of their proof shows that $\alpha=2-\sqrt{2}$ is in fact an upper bound for the covering threshold. Indeed, let $H$ be a $3$ graph on $n$ vertices with $\delta_1(H)\geq \alpha \binom{n-1}{2}+c(2n-1)$, for some $c\geq 10$. Let $x$ be an arbitrary vertex in $V(H)$. By averaging, there exists $y\in V(H)$ such that $\deg(xy)\geq\alpha n$.  Form the multigraph $G=H_x\cup H_y$ as in ~\cite[ Proof of Theorem 1.9, p 151]{MubayiRodl02}. Then \cite[Claim, p 151]{MubayiRodl02} shows that if there is no copy of $C_5^{(3)}$ covering the pair $xy$, then $G$ satisfies the conditions of \cite[Lemma 6.2, p 149]{MubayiRodl02}, and one can conclude as Mubayi and R\"odl do that one of $x$ and $y$ has degree at most $\alpha \binom{n-1}{2} + c(2n-1) -n$ in $H$ , contradicting our minimum degree assumption.
	
\end{proof}

\subsection{$K_t^{(3)}$, $t\geq 5$}\label{subsection: kt covering tiling}

\begin{proposition}\label{theorem: inductive upper bound on covering density}
	For all $t\geq 4$, $c_1(K_{t+1}^{(3)})\leq \frac{-1+\sqrt{3-2c_1(K_t^{(3)} )} }{1-c_1(K_t^{(3)})}$.
\end{proposition}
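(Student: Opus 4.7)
The plan is to proceed by induction on $t$. Let $\alpha := c_1(K_t^{(3)})$ and $\beta := \frac{-1+\sqrt{3-2\alpha}}{1-\alpha}$, and let $H$ be a 3-graph on $n$ vertices with $\delta_1(H) > (\beta+\varepsilon)\binom{n-1}{2}$; fixing an arbitrary $x \in V(H)$, the task is to find a copy of $K_{t+1}^{(3)}$ in $H$ containing $x$. The key reformulation is that a $K_{t+1}^{(3)}$ through $x$ corresponds precisely to $t$ vertices of $V(H)\setminus\{x\}$ that simultaneously span a clique $K_t$ in the link graph $G := H_x$ and span a copy of $K_t^{(3)}$ in $H$. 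This motivates introducing the auxiliary 3-graph
\[H^* \;:=\; \left\{\, T \in \binom{V(H)\setminus\{x\}}{3} \;:\; T \in E(H) \text{ and each pair in } T \text{ is an edge of } G \,\right\}\]
on the $n-1$ vertices of $V(H)\setminus\{x\}$: then $x$ lies in some $K_{t+1}^{(3)}$ in $H$ if and only if $H^*$ contains a copy of $K_t^{(3)}$.

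The heart of the argument is a lower bound on degrees in $H^*$. A standard inclusion--exclusion over the events ``$\{u,v,w\}\in H$'', ``$v,w\in N_G(u)$'', ``$\{v,w\}\in G$'' gives, for each $u\in V(H)\setminus\{x\}$,
\[\deg_{H^*}(u) \;\geq\; \deg_H(u) + e(G) + \binom{d_G(u)}{2} - 2\, d_G(u) - 2\binom{n-2}{2},\]
where $d_G(u) = d_H(\{x,u\})$ denotes both the $G$-degree of $u$ and the codegree of $\{x,u\}$ in $H$. Squaring and rearranging the defining equation $\beta(1-\alpha)+1 = \sqrt{3-2\alpha}$ for $\beta$ yields the key algebraic identity
\[\beta^2(1-\alpha) \;=\; 2(1-\beta),\]
which is the balancing condition at the heart of the argument. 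Combining the minimum-degree assumption on $H$ (which forces $\deg_H(u),\, e(G) \geq (\beta + \varepsilon)\binom{n-1}{2}$) with this identity, I would extract a large subset $W \subseteq V(H)\setminus\{x\}$ on which $H^*[W]$ has minimum $1$-degree exceeding $(\alpha + o(1))\binom{|W|-1}{2}$. Applying the inductive hypothesis $c_1(K_t^{(3)}) \leq \alpha$ to $H^*[W]$ then forces the existence of a $K_t^{(3)}$-covering of $H^*[W]$; in particular, $H^*[W]$ contains a copy of $K_t^{(3)}$, which together with $x$ yields the desired $K_{t+1}^{(3)}$ through $x$.

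The main obstacle is the extraction of $W$. The lower bound on $\deg_{H^*}(u)$ becomes useful only when $d_G(u)$ exceeds a threshold of the order of $n\sqrt{\alpha+2-2\beta}$, which is strictly larger than the global average $d_G \approx \beta(n-1)$; thus the naive set $W = V(H)\setminus\{x\}$ does not work. Overcoming this requires a convexity/Jensen-type argument on $\sum_u \binom{d_G(u)}{2}$ (in the spirit of Lemma~\ref{lemma: extended Jensen}) to show that the low-codegree vertices form only a small, removable set, so that induction can be safely applied on the surviving subhypergraph. The precise shape of $\beta$---and in particular the identity $\beta^2(1-\alpha)=2(1-\beta)$---is exactly what ensures that this peeling procedure terminates before $|W|$ shrinks below the threshold at which the inductive hypothesis ceases to be applicable.
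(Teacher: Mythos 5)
Your reformulation (a copy of $K_{t+1}^{(3)}$ through $x$ corresponds exactly to a copy of $K_t^{(3)}$ in the auxiliary $3$-graph $H^*$) and your algebraic identity $\beta^2(1-\alpha)=2(1-\beta)$, equivalently $\beta^2+2\beta-2=\alpha\beta^2$, are both correct, but the step you defer --- extracting $W$ with $\delta_1(H^*[W])>(\alpha+o(1))\binom{|W|-1}{2}$ --- is precisely where the argument breaks, and the peeling/convexity mechanism you propose cannot repair it. The failure already occurs when the link graph $G=H_x$ is (near-)regular with $e(G)\approx\beta\binom{n-1}{2}$: then every codegree satisfies $d_G(u)\approx\beta n$, nothing is peeled, $W$ is essentially all of $V(H)\setminus\{x\}$, and your inclusion--exclusion bound yields only $\deg_{H^*}(u)\gtrsim(2\beta+\beta^2-2)\frac{n^2}{2}$, which must be compared against $\alpha\frac{n^2}{2}$. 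That comparison forces $\beta\geq -1+\sqrt{3+\alpha}$, which is strictly larger than the claimed value $\frac{-1+\sqrt{3-2\alpha}}{1-\alpha}$ (at $\alpha=\frac{19}{27}$ this is roughly $0.92$ versus $0.88$). As you yourself note, the codegree level at which your bound becomes useful, about $n\sqrt{\alpha+2-2\beta}\approx 0.97n$, exceeds the \emph{average} codegree $\beta n$; so in the regular case it is not a small exceptional set that falls below the threshold but every vertex, and a Karamata/Jensen argument in the spirit of Lemma~\ref{lemma: extended Jensen} gives nothing when the codegree sequence is flat. The identity $\beta^2+2\beta-2=\alpha\beta^2$ can only do its work if the relevant degrees are measured against $\alpha\binom{\beta n}{2}$ rather than $\alpha\binom{n}{2}$, i.e.\ if the whole problem is localised to a vertex set of size about $\beta n$ --- and nothing in your single-apex construction produces such a set.

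This localisation is exactly what the paper's proof supplies, and it is where the two approaches genuinely diverge. To cover $v_{\star}$, the paper chooses a \emph{second} vertex $x$ with $\deg(x,v_{\star})\geq\beta n$ (possible by averaging over the link of $v_{\star}$), fixes $V'\subseteq\Gamma(x,v_{\star})$ with $|V'|=\beta n$, and builds the auxiliary $3$-graph $H'$ on $V'\cup\{z\}$ with $H'[V']=H[V']$ and with the link of the new apex $z$ equal to $H_x[V']\cap H_{v_{\star}}[V']$. The estimates $e(H_x[V']\cap H_{v_{\star}}[V'])\geq(\beta^2+2\beta-2)\frac{n^2}{2}+O(n)$ and $\deg_{H'}(y)\geq(\beta^2+\beta-1)\frac{n^2}{2}+O(n)$ are then compared with $\binom{|V'|}{2}\approx\beta^2\frac{n^2}{2}$, which is exactly how the relation $\beta^2+2\beta-2=(\alpha+\varepsilon)\beta^2$ enters; a copy of $K_t^{(3)}$ covering $z$ in $H'$ lifts to a copy of $K_{t+1}^{(3)}$ covering $v_{\star}$ (the triples $xv_{\star}y$ are edges automatically since $V'\subseteq\Gamma(x,v_{\star})$). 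If you want to salvage your route, you need a device that restricts attention to a set of size roughly $\beta n$ --- such as the common neighbourhood of a well-chosen pair --- rather than one that merely discards a few low-codegree vertices; as written, your proposal does not prove the stated bound.
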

\begin{proof} Let $\varepsilon>0$ and $n$ be sufficiently large. Suppose that $H$ is a $3$-graph on $n$ vertices with $\delta_1(H)\geq \alpha \frac{n^2}{2}$ for some $\alpha>0$ satisfying $1+ \frac2{\alpha} - \frac2{\alpha^2}= c_1(K_t^{(3)})+ \varepsilon$.  
Let $v_{\star}$ be an arbitrary vertex. By averaging, there exists a vertex $x\in V\setminus \{v_{\star}\}$ and an $\alpha n$-set $V'$ such that $V'\subseteq \Gamma(x,v_{\star})$.
Observe that 
\[
e(H_x[V'])\ge e(H_x) - |V'|(n - |V'|) - \binom{n- |V'|}2
\]
and an analogous bound holds for $e(H_{v_{\star}}[V'])$. Thus
\begin{align}
e( {H}_x[V']\cap {H}_{v_{\star}}[V'])& \geq  e(H_x) + e(H_{v_{\star}})- 2\vert V'\vert (n-\vert V'\vert) - 2\binom{n- |V'|}2 -\binom{\vert V'\vert }{2} \notag\\
&\geq \left(\alpha^2+ 2\alpha -2\right)\frac{n^2}2 + O(n).
\label{equation: bound on delta1 H' in Kt upper bound part 1}
\end{align} 
On the other hand, for any $y\in V'$, we have
\begin{align}
\vert \Gamma(y)\cap (V'\cup\{x,v_{\star} \})^{(2)}\vert & \geq \deg(y)- (\vert V'\vert +1)(n-\vert V'\vert -2) - \binom{n-\vert V'\vert -2}{2}\notag \\
& \geq \left({\alpha}^2 + \alpha - 1 \right) \frac{n^2}2 + O(n).
\label{equation: bound on delta1 H' in Kt upper bound part 2}
\end{align}

Note that
\begin{align}\label{eq:min2a}
\min\left( \alpha^2 + 2\alpha -2, {\alpha}^2 + \alpha - 1 \right)= \alpha^2 + 2\alpha -2 = (c_1(K_t^{(3)})+\varepsilon) \alpha^2.
\end{align}
Let $H'$ be the $3$-graph obtained by taking  $H[V']$ and adding a new vertex $z$ whose link graph consists precisely of those pairs $yy' \in E({H}_x[V']\cap {H}_{v_{\star}}[V'])$. By \eqref{equation: bound on delta1 H' in Kt upper bound part 1}, \eqref{equation: bound on delta1 H' in Kt upper bound part 2} and \eqref{eq:min2a}, $\delta_1(H')\geq  \left(c_1(K_t)+\frac{\varepsilon}2 \right)\binom{v(H')}{2}$. Thus provided $\alpha n = v(H')$ is sufficiently large, there must be a set $S\subseteq V'$ such that $S\cup\{z\}$ induces a copy of $K_{t}^{(3)}$ in $H'$ covering $z$. But then by construction of $H'$, this implies that $S\cup\{x, v_{\star}\}$ induces a copy of $K_{t+1}^{(3)}$ covering $v_{\star}$ in $H$. It follows that $\alpha \ge c_1(K_{t+1}^{(3)})$, and hence (since $\varepsilon>0$ was arbitrary) that $c_1(K_{t+1}^{(3)})\leq \frac{-1+\sqrt{3-2c_1(K_t^{(3)} )} }{1-c_1(K_t^{(3)})}$.
\end{proof}

\begin{proposition}\label{proposition: construction of lower bounds for covering density of Kt}
Suppose there exists a $3$-graph $H$ on $[N]$ such that
\begin{enumerate}[(i)]
	\item every vertex of $H$ has degree at most $d$;
	\item every $t$-set of vertices from $V(H)$ spans at least one edge.
\end{enumerate}
Then we have
\[c_1\left(K_{t+1}^{(3)}\right)\geq \min\left(1-\frac{1}{N}, 1-\frac{2d}{N^2}\right)\]
\end{proposition}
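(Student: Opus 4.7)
My approach is a blow-up construction that produces a $3$-graph $G$ with a distinguished vertex $v_\star$ not covered by any copy of $K_{t+1}^{(3)}$. The key observation is that hypothesis (ii) is equivalent to saying the complement $3$-graph $\overline{H}$ on $[N]$ is $K_t^{(3)}$-free: no $t$-set in $H$ is independent, i.e.\ spans only non-edges. This is precisely what is needed to prevent the link graph of $v_\star$ from containing a $K_t^{(3)}$-style clique.

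Explicitly, I fix $n$ large with $N \mid n-1$ and set $m := (n-1)/N$. Pick a vertex $v_\star$ and partition the remaining $n-1$ vertices into parts $V_1, \dots, V_N$ of size $m$, indexed by $V(H) = [N]$. Define $G$ by declaring $\{v_\star, y, z\}$ to be an edge exactly when $y$ and $z$ lie in distinct parts; declaring a transversal triple $\{x, y, z\}$ with $x \in V_i$, $y \in V_j$, $z \in V_k$ (all distinct) to be an edge exactly when $\{i,j,k\} \notin E(H)$; and declaring every triple not involving $v_\star$ that has at least two vertices in a common part to be an edge. The last clause is chosen purely to maximise the degrees of non-$v_\star$ vertices; it cannot introduce new copies of $K_{t+1}^{(3)}$ through $v_\star$ because any such copy would require pairwise-distinct parts on the $t$ neighbours of $v_\star$.

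Two short verifications then complete the proof. First, any $K_{t+1}^{(3)}$ through $v_\star$ would yield $t$ vertices $u_1, \dots, u_t$ pairwise joined in the link $G_{v_\star}$, forcing them into $t$ distinct parts $V_{i_1}, \dots, V_{i_t}$; every transversal triple among them being an edge of $G$ would then say $\{i_1, \dots, i_t\}$ is an independent set of size $t$ in $H$, contradicting (ii). Second, a direct count gives $\deg_G(v_\star) = \binom{n-1}{2} - N\binom{m}{2} = (1 - 1/N + o(1))\binom{n-1}{2}$. For $x \in V_i$, the non-edges at $x$ consist of the $m-1$ pairs $\{v_\star, y\}$ with $y \in V_i \setminus \{x\}$, together with, for each edge $\{i,j,k\} \in H$ containing $i$, the $m^2$ transversal pairs in $V_j \times V_k$; by (i) this totals at most $(m-1) + dm^2$, so $\deg_G(x) \geq (1 - 2d/N^2 + o(1))\binom{n-1}{2}$.

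Taking the minimum of the two degree bounds yields $\delta_1(G) \geq \bigl(\min(1 - 1/N,\; 1 - 2d/N^2) + o(1)\bigr)\binom{n-1}{2}$, and letting $n \to \infty$ delivers the claimed lower bound on $c_1(K_{t+1}^{(3)})$. There is no real obstacle here: the whole argument is a design-and-count, and the only meaningful choice is how to handle the non-transversal triples not through $v_\star$, for which the greedy ``include them all'' strategy is both safe (it adds no new $K_{t+1}^{(3)}$'s covering $v_\star$) and optimal (it maximises degrees in the $V_i$'s).
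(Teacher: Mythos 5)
Your proposal is correct and is essentially the paper's own proof: the same blow-up construction with a distinguished vertex $v_\star$ whose link is the complete $N$-partite graph, transversal triples made edges exactly when their part-indices avoid $E(H)$, all remaining triples included, and the same two degree counts giving $\min\left(1-\frac{1}{N},\,1-\frac{2d}{N^2}\right)$.
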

\begin{proof}
We construct a $3$-graph $G$ on $[Nn+1]$ as follows. Set aside $v_{\star}=Nn+1$, and partition the remaining vertices into $n$-sets $V_1, V_2, \ldots V_N$. Now let the link graph of $v_{\star}$ in $G$ be the complete $N$-partite graph on $[Nn]$ with partition $\sqcup_{i=1}^N V_i$. To make up the remainder of the edges of $G$, add in all triples $v_1v_2v_3$ from $[Nn]^{(3)}$ with $v_j \in V_{i_j}$ for $j=1,2,3$ and $i_1i_2i_3 \notin E(H)$.

Clearly $\deg_G(v_{\star})= \binom{nN}{2}-N\binom{n}{2}=\left(1-\frac{1}{N}\right)\binom{nN}{2}+O(n)$, and every other vertex $x\in [nN]$ with $x\in V_i$ has degree
\[\deg_G(x)= n(N-1)+ \binom{nN-1}{2} -\deg_H(i) n^2\geq \left(1- \frac{2d}{N^2}\right)\binom{nN}{2}+O(n).\]
Thus $\delta_1(G)\geq \min 	\left(1-\frac{1}{N}, 1-\frac{2d}{N^2}\right)\binom{nN}{2}+O(n)$. Furthermore, every complete graph $G_{v_{\star}}[T]$ on $\vert T\vert = t$ vertices in the link graph of $v_{\star}$  in $G$ meets $t$ different parts $V_{i_1}, \ldots , V_{i_t}$ from our partition of $[nN]$. By assumption, $i_1i_2, \ldots i_t$ spans at least one edge of $H$, whence we have that at least one of the triples from $T^{(3)}$ is missing from $E(G)$. In particular $\{v_{\star}\}\cup T$ does not span a copy of $K_{t+1}^{(3)}$ in $G$, and $G$ fails to have a $K_{t+1}^{(3)}$-cover. The proposition follows.
	\end{proof}

A natural family of $3$-graphs for applications of Proposition~\ref{proposition: construction of lower bounds for covering density of Kt} are \emph{Steiner triple systems} (STS), where each pair of vertices is contained in a unique edge. Let $\alpha_t$ denote the minimum of the independence number over all STS of order $t$. The unique (up to isomorphism) STS of orders $3$ and $7$ are the $3$-edge $K_3^{(3)}$ and the Fano plane $S_7$ respectively, which give $\alpha_3=2$, $\alpha_7=4$. The affine plane of order $9$, $S_9$, is the unique up to isomorphism STS of order $9$ and has $\alpha(S_9)=\alpha_9=4$. It is further known that $\alpha_{13}=6$, $\alpha_{15}=6$~\cite{MathonPhelpsRosa82}, and $\alpha_{19}=7$~\cite{ColbournForbesGrannellGriggsKaskiOstergardPikePottonen10} (see also the monograph of Kaski and \"Osterg{\aa}rd~\cite{KaskiOstergaard06}).

\begin{proposition}\label{prop: kt bounds}
	\begin{align*}
	0.8888\ldots =\frac{8}{9} &\leq c_1\left(K_6^{(3)}\right)\leq 0.947962\ldots\\
	0.9333\ldots =\frac{14}{15} &\leq c_1\left(K_{8}^{(3)}\right)\leq  0.98793\ldots\\ 
	0.9473\ldots =\frac{18}{19}&\leq c_1\left(K_{9}^{(3)}\right) \leq 0.99404\ldots  \ .
	\end{align*}
\end{proposition}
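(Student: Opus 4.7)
The plan is to derive the three lower bounds by instantiating Proposition~\ref{proposition: construction of lower bounds for covering density of Kt} with carefully chosen Steiner triple systems, and to derive the three upper bounds by iterating the recurrence from Proposition~\ref{theorem: inductive upper bound on covering density} starting from the bound on $c_1(K_4^{(3)})$ given by Theorem~\ref{theorem: k4 bounds}.

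For the lower bounds, the key observation is that in any Steiner triple system $S_N$ on $N$ vertices, every vertex has degree exactly $(N-1)/2$. Feeding such a system into Proposition~\ref{proposition: construction of lower bounds for covering density of Kt} with $d = (N-1)/2$, the conclusion simplifies to $c_1(K_{t+1}^{(3)}) \geq \min\bigl(1 - 1/N,\, 1 - (N-1)/N^2\bigr) = 1 - 1/N$, provided the independence number of $S_N$ is at most $t-1$. With this in mind: for $c_1(K_6^{(3)}) \geq 8/9$, apply the proposition with $t=5$ to the unique STS of order $9$ (invoking $\alpha_9 = 4$); for $c_1(K_8^{(3)}) \geq 14/15$, apply it with $t=7$ to an STS of order $15$ realising $\alpha_{15}=6$; and for $c_1(K_9^{(3)}) \geq 18/19$, apply it with $t=8$ to an STS of order $19$ realising $\alpha_{19}=7$. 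Existence of Steiner triple systems with these precise independence numbers is already recorded in the paragraph preceding the proposition.

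For the upper bounds, let $g(x) := (-1 + \sqrt{3 - 2x})/(1 - x)$ be the function appearing in Proposition~\ref{theorem: inductive upper bound on covering density}; set $\alpha_4 := 19/27 + 7.4 \times 10^{-9}$, so that $c_1(K_4^{(3)}) \leq \alpha_4$ by Theorem~\ref{theorem: k4 bounds}; and inductively define $\alpha_{t+1} := g(\alpha_t)$ for $t \geq 4$. Proposition~\ref{theorem: inductive upper bound on covering density} then yields $c_1(K_t^{(3)}) \leq \alpha_t$ for every $t \geq 4$ by induction. A routine numerical iteration of $g$ gives $\alpha_6 \leq 0.947962$, $\alpha_8 \leq 0.98793$ and $\alpha_9 \leq 0.99404$ to the stated precision, producing the three upper bounds.

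All structural content is already packaged in the two propositions, so no real obstacle is expected: the proof amounts to (a) selecting the right Steiner triple system in each of the three lower-bound cases and verifying that the minimum in Proposition~\ref{proposition: construction of lower bounds for covering density of Kt} is attained by the $1-1/N$ term, and (b) carrying out the numerical iteration of $g$ to sufficient accuracy for the upper bounds.
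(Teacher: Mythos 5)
Your proposal is correct and follows essentially the same route as the paper: the lower bounds come from Proposition~\ref{proposition: construction of lower bounds for covering density of Kt} applied to Steiner triple systems of orders $9$, $15$, $19$ of minimum independence number (with the observation that $d=(N-1)/2$ makes the minimum equal to $1-1/N$), and the upper bounds come from iterating Proposition~\ref{theorem: inductive upper bound on covering density} starting from the $K_4^{(3)}$ bound of Theorem~\ref{theorem: k4 bounds}. The numerical iteration indeed reproduces the stated values, so no gap remains.
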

\begin{proof}\ \newline
\noindent\textbf{Lower bound:}  apply Proposition~\ref{proposition: construction of lower bounds for covering density of Kt} to STS of orders $9$, $15$ and $19$ with minimum independence numbers, and observe that an STS of order $t$ is a $d=\frac{t-1}{2}$-regular $3$-graph, so that $\min(1-\frac{1}{t}, 1-\frac{2d}{t^2})=1-\frac{1}{t}$.

 	\noindent\textbf{Upper bound:} repeatedly apply Proposition~\ref{theorem: inductive upper bound on covering density} with our upper bound $c_1(K_4)\leq \frac{19}{27} + 7.4\times 10^{-9}$ from Theorem~\ref{theorem: k4 bounds}.
\end{proof}
\begin{remark}
	The lower bounds on the covering densities in Proposition~\ref{prop: kt bounds} above are strictly stronger than the bounds one gets from the conjectured values of the corresponding Tur\'an densities. 

In each case, they are about $5\times 10^{-2}$ below our upper bounds. Note that if one applies Proposition~\ref{proposition: construction of lower bounds for covering density of Kt} to the unique STS on $3$-vertices, one gets  a lower bound of $2/3$ for $c_1(K_4^{(3)})$. We obtained an improvement of this bound in Theorem~\ref{theorem: k4 bounds} by almost $5\times 10^{-2}$ by adding a few edges in the link graph of $v_{\star}$ and deleting a few triples meetings the corresponding pairs. It seems natural to believe a similar (albeit significantly more intricate) process would similarly improve the lower bounds in Proposition~\ref{prop: kt bounds}. If we had to guess, we would thus say that the true value of $c_1(K_t^{(3)})$ for $t=6,8,9$ probably lies closer to the upper bounds we give.
\end{remark}

\noindent For completeness, we give (very weak) bounds on $c_1\left(K_5^{(3)}\right)$, which show $ c_1\left(K_4^{(3)}\right)< c_1\left(K_5^{(3)}\right)<c_1\left(K_6^{(3)}\right)$.
\begin{proposition}\label{proposition: k5 bounds}
	$\frac{3}{4}\leq c_1\left(K_5^{(3)}\right)\leq 0.8842\ldots$
\end{proposition}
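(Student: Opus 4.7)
The proposition follows almost immediately from two tools already established in this section: Proposition~\ref{proposition: construction of lower bounds for covering density of Kt} (a lower-bound template fed by a small auxiliary 3-graph) and Proposition~\ref{theorem: inductive upper bound on covering density} (a recursive upper bound expressing $c_1(K_{t+1}^{(3)})$ in terms of $c_1(K_{t}^{(3)})$). The plan is simply to feed each tool the smallest piece of input it needs.

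For the lower bound, I will apply Proposition~\ref{proposition: construction of lower bounds for covering density of Kt} with $t=4$ and the trivial witness $N=4$, $H=\{\{1,2,3\}\}$. The only 4-subset of $[4]$ is $[4]$ itself, which contains the single edge, so hypothesis~(ii) holds, and the maximum vertex-degree is $d=1$. The proposition then yields
\[c_1(K_5^{(3)}) \;\geq\; \min\!\left(1-\tfrac{1}{4},\ 1-\tfrac{2}{16}\right) \;=\; \tfrac{3}{4}.\]
Unpacking the construction, this corresponds to the following explicit 3-graph on $4n+1$ vertices: set aside $v_\star$, partition the remaining vertices into equal parts $V_1,V_2,V_3,V_4$ of size $n$, take the link of $v_\star$ to be the complete 4-partite graph on these parts, and add every triple from $[4n]^{(3)}$ except those of type $V_1V_2V_3$. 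Any $K_4$ in the link of $v_\star$ must be a transversal of the partition and so contains the deleted triple, so $v_\star$ lies in no $K_5^{(3)}$; a direct count gives $\deg(v_\star)=6n^2=(3/4)\binom{4n}{2}+O(n)$, while all other vertices have strictly larger degree.

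For the upper bound, I will plug the estimate $c_1(K_4^{(3)}) \leq \tfrac{19}{27} + 7.4\times 10^{-9}$ from Theorem~\ref{theorem: k4 bounds} into the recursion of Proposition~\ref{theorem: inductive upper bound on covering density}. With $\alpha=19/27$ the right-hand side evaluates to $(-1+\sqrt{43/27})/(8/27)\approx 0.88417$; since the map $\alpha \mapsto (-1+\sqrt{3-2\alpha})/(1-\alpha)$ has derivative of order $1$ near $\alpha=19/27$, the additive correction of order $10^{-8}$ perturbs this by at most a few times $10^{-9}$, and one obtains $c_1(K_5^{(3)}) \leq 0.8842$.

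There is no real obstacle: both halves are short arithmetic checks. As the authors note in the preceding remark, these bounds are included only to certify that the covering densities of $K_4^{(3)}$, $K_5^{(3)}$ and $K_6^{(3)}$ are genuinely distinct; tightening either inequality would require substantively new constructions (for the lower bound, a 3-graph on more than four vertices with the covering property of Proposition~\ref{proposition: construction of lower bounds for covering density of Kt}, augmented in the spirit of the $K_4^{(3)}$ construction from Theorem~\ref{theorem: k4 bounds}) rather than a cleverer combination of tools already at hand.
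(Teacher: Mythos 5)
Your proposal is correct. The upper-bound half coincides with the paper's proof: both simply feed the bound $c_1(K_4^{(3)})\leq \frac{19}{27}+7.4\times 10^{-9}$ from Theorem~\ref{theorem: k4 bounds} into Proposition~\ref{theorem: inductive upper bound on covering density}; since the map $c\mapsto \frac{-1+\sqrt{3-2c}}{1-c}$ is increasing near $c=\frac{19}{27}$, substituting the upper estimate is legitimate and the $10^{-8}$ correction is indeed negligible, giving $0.8842\ldots$. Where you genuinely diverge is the lower bound: the paper does not use Proposition~\ref{proposition: construction of lower bounds for covering density of Kt} here at all, but instead takes the $3$-graph on $[2n]$ whose edges are all triples meeting both halves of a balanced bipartition; this graph is outright $K_5^{(3)}$-free and has minimum degree $\binom{2n-1}{2}-\binom{n-1}{2}=\frac{3}{4}\binom{2n-1}{2}+O(n)$. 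Your route --- Proposition~\ref{proposition: construction of lower bounds for covering density of Kt} with $t=4$, $N=4$, $H=\{\{1,2,3\}\}$, $d=1$ --- is equally valid, your verification of hypotheses (i)--(ii) and your unpacking of the resulting apex construction are accurate, and it gives the same constant $\min\left(\frac{3}{4},\frac{7}{8}\right)=\frac{3}{4}$. The paper's construction is more economical (no apex vertex, and it simultaneously witnesses the Tur\'an-type bound), but your modular route has a concrete advantage: it improves automatically with a better input $3$-graph. For example, $H$ on $[5]$ with edges $\{1,2,3\},\{1,4,5\},\{2,4,5\}$ has maximum degree $2$ and every $4$-subset of $[5]$ spans an edge, so the same proposition already yields $c_1(K_5^{(3)})\geq \min\left(\frac{4}{5},\frac{21}{25}\right)=\frac{4}{5}$, strictly better than the stated $\frac{3}{4}$ (consistent with the authors' own description of these bounds as very weak). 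One small attribution slip: the sentence about these bounds serving only to separate $c_1(K_4^{(3)})<c_1(K_5^{(3)})<c_1(K_6^{(3)})$ appears in the text immediately before the proposition, not in the preceding remark, which concerns $K_6^{(3)}, K_8^{(3)}, K_9^{(3)}$.
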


\begin{proof}\ \newline
	\noindent\textbf{Lower bound:}  consider a partition of $[2n]$ into $n$-sets, $[2n]=V_1\sqcup V_2$. Let $G$ be the $3$-graph on $[2n]$ whose edge-set consists of all triples meeting both $V_1$ and $V_2$. It is easily checked that $G$ is $K_5^{(3)}$-free and has minimum degree $\binom{2n-1}{2}-\binom{n-1}{2}=\frac{3}{4}\binom{2n-1}{2}+O(n)$, giving us the required lower bound.

	\noindent\textbf{Upper bound:} apply Proposition~\ref{theorem: inductive upper bound on covering density} with our upper bound $c_1(K_4^{(3)})\leq \frac{19}{27} + 7.4\times 10^{-9}$ from Theorem~\ref{theorem: k4 bounds}.	
\end{proof}

\section{Triangle-degree in graphs} \label{section: triangle-degree}
\label{section:triangle-degree}
In this section, we investigate the problem of minimising the maximum triangle-degree $\tau (\rho)n^2/2$ in a $2$-graph with a given edge density $\rho$. We give upper bound constructions for $\tau(\rho)$, which we conjecture are best possible. We show our conjecture holds for tripartite graphs and use flag algebra computations to bound below $\tau(\rho)$ for general graphs with $1/2<\rho \leq 2/3$.


\subsection{Proof of Theorem~\ref{theorem: upper bounds on tau}}

\begin{proposition}\label{proposition: conjecture implies c1(K4) bound tight}
Conjecture~\ref{conjecture: tau value} implies $c_1(K_4^{(3)})=\frac{19}{27}$.
\end{proposition}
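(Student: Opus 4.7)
The plan is to apply Corollary~\ref{corollary: covering density for complete r-graphs on r+1 vertices} with $r=3$: this identifies $c_1(K_4^{(3)})$ as the least $\alpha$ for which every $n$-vertex graph of edge density at least $\alpha$ must contain a vertex lying in $(1-\alpha+o(1))\binom{n-1}{2}$ triangles. In the notation of Section~\ref{section:triangle-degree}, this is exactly the least $\alpha$ satisfying $\tau(\alpha)\geq 1-\alpha$. Under Conjecture~\ref{conjecture: tau value}, $\tau$ coincides with the explicit piecewise-linear formula of Theorem~\ref{theorem: upper bounds on tau}, so the task reduces to a routine algebraic comparison of that formula with the line $1-\rho$.

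The decisive piece is the first piece corresponding to $r=3$: on $\rho\in[\tfrac{2}{3},\tfrac{13}{18}]$ the conjecture gives $\tau(\rho)=\tfrac{2}{9}+2(\rho-\tfrac{2}{3})$, so $\tau(\rho)-(1-\rho)=3(\rho-\tfrac{19}{27})$. This vanishes precisely at $\rho=\tfrac{19}{27}$ and is positive for larger $\rho$. In particular $\tau(\tfrac{19}{27})=\tfrac{8}{27}=1-\tfrac{19}{27}$, which combined with the matching lower bound $c_1(K_4^{(3)})\geq \tfrac{19}{27}$ already established in Theorem~\ref{theorem: k4 bounds} pins down $c_1(K_4^{(3)})=\tfrac{19}{27}$, provided one also verifies that the remaining pieces of the formula never dip below $1-\rho$.

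To complete the proof I would check $\tau(\rho)\geq 1-\rho$ on the second piece $[\tfrac{13}{18},\tfrac{3}{4}]$ for $r=3$ and on every piece $[(r-1)/r,r/(r+1)]$ with $r\geq 4$. In each piece, $\tau(\rho)-(1-\rho)$ is linear in $\rho$ with positive slope ($(4r-3)/r$ on the first piece, $(4r-2)/(r+1)$ on the second), so positivity at a single endpoint of each piece suffices. At the left endpoint $\rho=(r-1)/r$ of the first piece the difference equals $(r^2-4r+2)/r^2$, which is nonnegative for $r\geq 4$, and at the right endpoint $\rho=r/(r+1)$ of the second piece it equals $(r^2-2r-1)/(r+1)^2$, positive for all $r\geq 3$. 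Continuity of $\tau$ at the internal matching points $r/(r+1)-1/(3r(r+1))$ handles the transitions between the two sub-pieces attached to a given $r$, and continuity at the junctions $\rho=r/(r+1)$ between consecutive values of $r$ is similarly immediate. No substantive obstacle is expected beyond the bookkeeping of these pieces; the content of the proof is the fact that the lower envelope of the conjectured $\tau$ meets the line $1-\rho$ at exactly the one point $\rho=\tfrac{19}{27}$ matching the construction behind Theorem~\ref{theorem: k4 bounds}.
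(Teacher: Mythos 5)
Your proof is correct and follows essentially the same route as the paper's: both rest on the covering/triangle-degree equivalence (Proposition~\ref{proposition:  r-unif K(r+1) covering threshold and (r-1)-unif Kr degree} and Corollary~\ref{corollary: covering density for complete r-graphs on r+1 vertices}), the evaluation $\tau(\tfrac{19}{27})=\tfrac{8}{27}=1-\tfrac{19}{27}$ under Conjecture~\ref{conjecture: tau value}, and the lower bound of Theorem~\ref{theorem: k4 bounds}. Your extra piece-by-piece check that the conjectured formula never dips below $1-\rho$ beyond $\tfrac{19}{27}$ is harmless but redundant, since $\tau$ is non-decreasing while $1-\rho$ is decreasing, so one crossing suffices; and for those increasing linear differences it is really the left endpoint of each piece (supplied by continuity from the preceding piece), not the right one, whose sign matters.
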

\begin{proof}
Suppose $\rho= c_1(K_4^{(3)})$. 
By Proposition~\ref{proposition:  r-unif K(r+1) covering threshold and (r-1)-unif Kr degree}, there exist a sequence $(G_n)_{n\in \mathbb{N}}$ of $2$-graphs with $v(G_n)\rightarrow \infty$, $\rho(G_n)\geq \rho+o(1)$ and $t_{\mathrm{max}}(G_n)\leq (1-\rho +o(1))\binom{v(G_n)-1}{2}$. In particular, this implies that $\tau(\rho)\leq 1-\rho$. 	If Conjecture~\ref{conjecture: tau value} is true, then since $\frac{19}{27}\in (\frac23, \frac34)$, we have $\tau(\frac{19}{27})= \frac{8}{27}$ and $\tau(\frac{19}{27}+\eps)> \frac{8}{27}$ for sufficiently small $\eps>0$. Hence $\rho \leq \frac{19}{27}$. Together with the lower bound from Theorem~\ref{theorem: k4 bounds}, we conclude that $c_1(K_4^{(3)})= \frac{19}{27}$.
\end{proof}

We now give constructions for two families of graphs used in the proof of Theorem~\ref{theorem: upper bounds on tau}.

\begin{construction}[Lower interval construction]\label{construction: upwards}
	Let $\rho \in [\frac{r-1}{r}, \frac{r}{r+1}- \frac{1}{3r(r+1)}]$ for some $r\in \mathbb{N}$. 
Suppose $n\in \mathbb{N}$ is divisible by $2r$.  	
Consider a balanced complete $r$-partite graph on $[n]$ with parts $V_1, \ldots V_r$. Add inside each $V_i$ an arbitrary $d$-regular triangle-free graph $H_i$, where $d=\left\lfloor \left(\rho-\frac{r-1}{r}\right)n\right\rfloor $.	Such triangle-free graphs exist since $d\leq \frac{2}{3(r+1)}\frac{n}{r}$ (by our upper bound on $\rho$), which is less than $n/(2r)$ (so one could take $H_i$ to b a balanced bipartite graph, for example). The resulting graph is $\lfloor\rho n\rfloor $-regular. We denote by $\mathcal{G}^u_{\rho,n}$ the family of all graphs that can be constructed in this way. 
	\end{construction}
\begin{construction}[Upper interval construction]\label{construction: downwards}
	Let $\rho \in [\frac{r}{r+1}- \frac{1}{3r(r+1)}, \frac{r}{r+1}]$ for some $r\in \mathbb{N}$. 
Suppose $n\in \mathbb{N}$ is divisible by $2(r+1)$.  	
Consider a balanced complete $(r+1)$-partite graph on $[n]$ with parts $V_1, \ldots V_{r+1}$.  Equally divide each $V_i$ into $V_i'$ and $V_i''$. Let $\phi: \ [r+1]\rightarrow [r+1]$ be any bijection with the property that $\phi(i)\neq i$ for all $i \in [r+1]$ (any permutation of $[r+1]$ with no fixed point will do). Now for every $i \in [r+1]$, replace the complete bipartite graph between $V_i'$ and $V_{\phi(i)}''$ by an arbitrary $d$-regular bipartite subgraph $H_i$, where $d =\left\lceil \left(\rho-\frac{r}{r+1}+\frac{1}{2(r+1)}\right)n\right\rceil$. The resulting graph is $\lceil \rho n\rceil$-regular. We denote by $\mathcal{G}^d_{\rho,n}$ the family of all graphs that can be constructed in this way. 

\end{construction}
\begin{remark}
	The choices of the graphs $H_i$ in both Construction~\ref{construction: upwards} and~\ref{construction: downwards} give rise to very different graphs (lying at edit distance $\Omega(n^2)$  from each other). In particular if Conjecture~\ref{conjecture: tau value} is correct, then the problem of minimising the maximum triangle-degree is not stable. This stands in some contrast with the Rademacher--Tur\'an problem for triangles, for which Pikhurko and Razborov \cite{PikhurkoRazborov16} obtained a stability result, establishing that there is an asymptotically unique way of minimising the number of triangles for a given edge-density.   This instability is observed even at the level of subgraph frequencies, as e.g. in the first construction we could take as $H_i$  a subgraph of a blow-up of the five-cycle instead of a bipartite graph, provided $\rho \leq \frac{r-1}{r}+\frac{2}{5r}$.
	
In particular, this suggests Conjecture~\ref{conjecture: tau value} may be harder to resolve  than the Rademacher--Tur\'an problem for triangles, and might not amenable to standard flag algebraic approaches due to the instability of the extremal examples.
	
\end{remark}

\begin{proof}[Proof of Theorem~\ref{theorem: upper bounds on tau}]
Assume that $\rho \in [\frac{r-1}{r}, \frac{r}{r+1}]$ for some $r\in \mathbb{N}$. When $r=1$, a $\rho n$-regular bipartite graph on $n$ vertices (we may use $\mathcal{G}^u_{\rho, n}$ and $\mathcal{G}^d_{\rho, n}$ as well)
shows that $\tau(\rho)=0$. So we may assume that  $r\geq2$.
	
First assume that $\rho \in [\frac{r-1}{r}, \frac{r}{r+1}- \frac{1}{3r(r+1)}]$. Consider an arbitrary graph $G$ of $\mathcal{G}^u_{\rho, n}$, for some $n$ divisible by $2r$. Pick a vertex $x\in V_i$. Let us compute the triangle-degree of $x$. There are at most $(n-\vert V_i\vert) d$ pairs $(y,x')$ with $x'\in V_{i}\setminus \{x\}$, $y\in [n]\setminus V_i$ and $xx'y$ forming a triangle in $G$. Further, there are at most $\frac{1}{2}\sum_{j\neq i} \vert V_j\vert d$ pairs $(y,y')$ with $y,y'\in V_j \neq V_i$ and $xyy'$ forming a triangle in $G$. Finally, there are at most $\frac{1}{2}\sum_{j: \ j\neq i} \sum_{k: \ k \neq i,j}\vert V_j\vert \vert V_k\vert$ pairs $(y,z)$ with $y\in V_j$, $z\in V_k$, $V_i, V_j, V_k$ all distinct and $xyz$ forming a triangle in $G$. Since each part $V_i$ is triangle-free by construction, there are no other triangles in $G$ containing $x$, and the triangle-degree of $x$ is thus at most
	\begin{align*} 
	t_G(x)&= \frac{r-1}{r}n\left\lfloor\left(\rho-\frac{r-1}{r}\right)n\right\rfloor+\frac{r-1}{2r}n \left\lfloor\left(\rho-\frac{r-1}{r}\right)n\right\rfloor+ \frac{(r-1)(r-2)}{2r^2} {n^2} \\
	&= \left(\frac{(r-1)(r-2)}{r^2}+ \frac{3(r-1)}{r} \left(\rho-\frac{r-1}{r}\right) \right)\frac{n^2}{2}+O(n).
	\end{align*}
	This gives the claimed upper bound on $\tau(\rho)$ for $\rho \in [\frac{r-1}{r}, \frac{r}{r+1}-\frac{1}{3r(r+1)}]$.

	Next, assume that $\rho \in [ \frac{r}{r+1}- \frac{1}{3r(r+1)}, \frac{r}{r+1}]$. Consider an arbitrary graph $G$ of $\mathcal{G}^d_{\rho, n}$, for some $n$ divisible by $2(r+1)$. Pick a vertex $x\in V_i'$ (the case when $x\in V''_i$ is analogous) . When computing the triangle-degree of $x$, it is  more convenient to count the number of triangles containing $x$ in the balanced complete $(r+1)$-partite graph from which an edge was deleted when constructing $G$. Observe that 
every triangle has lost at most one edge.

	First of all, we have lost $(\vert V_{\phi(i)}''\vert -d)\left(\frac{r-1}{r+1}\right)n$
	triangles of the form $xyz$ with $y\in V_{\phi(i)}''$. Secondly, for every $y\in [n]\setminus \left(V_i\cup V''_{\phi(i)}\cup V'_{\phi^{-1}(i)}\right)$, there are $ \frac{n}{2(r+1)}  -d $ vertices $z\in [n]\setminus \left(V_i\cup V''_{\phi(i)}\cup V'_{\phi^{-1}(i)}\right)$ such that the edge $yz$ was lost. This results in $\frac{r-1}{r+1} \frac{n}2 (\frac{n}{2(r+1)} -d)$ lost triangles $xyz$. In total there are 
	\begin{align*} 
	\left(\frac{n}{2(r+1)} -d \right)\left(\frac{r-1}{r+1}\right)n + \frac{n}{2 }\left(\frac{r-1}{r+1}\right)\left(\frac{n}{2(r+1)}  -d\right)=3\left(\frac{r-1}{r+1}\right) \left(\frac{r}{r+1}-\rho\right)\frac{n^2}{2}+O(n) 
	\end{align*}
	lost triangles for $x$. Subtracting this quantity from the triangle-degree of $x$ in the original  complete balanced $(r+1)$-partite graph, we get
	\[t_G(x)= \left( \frac{r(r-1)}{(r+1)^2} - 3\left(\frac{r-1}{r+1} \right)\left(\frac{r}{r+1}-\rho\right)\right) \frac{n^2}{2}+O(n).\]
	This gives the claimed upper bound on $\tau(\rho)$  for $\rho \in [ \frac{r}{r+1}- \frac{1}{3r(r+1)}, \frac{r}{r+1}]$.
\end{proof}

\subsection{Proof of Theorem~\ref{theorem: tripartite theorem}}
\label{subsection: tripartite graphs}
For this range of $e(G)$, Conjecture~\ref{conjecture: tau value} states that for any $n$-vertex graph $G$,
\begin{align*}
t_{\mathrm{max}}(G)\geq \left\{ \begin{array}{ll}
	0 & \textrm{if } e(G)\leq \frac{n^2}{4},\\
	\frac{3}{2}\left(e(G)-\frac{n^2}{4}\right) +O(n) & \textrm{if }   \frac{n^2}{4}\leq e(G)\leq \frac{11}{36}n^2,\\
	e(G)-\frac{2}{9}n^2 +O(n) & \textrm{if }   \frac{11}{36}n^2\leq e(G)\leq \frac{1}{3}n^2.
\end{array}
\right.
\end{align*}
		
\begin{remark}\label{remark: tripartite theorem implies tripartite conjecture}
	Since $\frac{3}{10}<\frac{11}{36}$ and since for $e(G)<\frac{11}{36}n^2$ we have
	\[e(G)-\frac{2}{9}n^2 > \frac{3}{2}\left(e(G)-\frac{n^2}{4}\right),\]
	Theorem~\ref{theorem: tripartite theorem} implies that Conjecture~\ref{conjecture: tau value} holds true for all tripartite graphs.
\end{remark}
\begin{proof}[Proof of Theorem~\ref{theorem: tripartite theorem}]
Let $G$ be an $n$-vertex tripartite graph  with partition $A\sqcup B\sqcup C$. 
Since $t_{\mathrm{max}}(G)$ is nonnegative, «
we only need to consider the case when $e(G)>\frac{n^2}{4}$. Assume without loss of generality that 
\[|A|\ge |B|\ge |C|. \]
Suppose $\vert A\vert =xn$ and $ \vert B\vert = yn$ (and so $|C|= (1-x-y)n$).
Then $x \geq y \geq \frac{1-x}{2}\geq 0$, and in particular $x\geq \frac{1}{3}$. Since $|B| |C|\le (\frac{1-x}2)^2$,
we have
\[ 
e(G)\leq \vert A\vert (n-\vert A\vert ) + \vert B\vert \vert C\vert\leq 
\left(x(1-x)+ \left(\frac{1-x}{2}\right)^2\right)n^2.\]
The function of $x$ on the right-hand side has derivative $\frac{3}{2}\left(\frac{1}{3}-x\right)n^2\leq 0$ for $x\geq \frac{1}{3}$, and attains the value $\frac{n^2}{4}$ at $x=\frac{2}{3}$. Since $e(G)> n^2/4$, 
we must have $x<\frac{2}{3}$.

Write $\alpha$ for the edge density of $G$ between parts $B$ and $C$, $\beta $ for the edge density between parts $A$ and $C$, and $\gamma$ for the edge density between parts $A$ and $B$. So we have
\begin{align*}
\frac{e(G)}{n^2}= \gamma xy + \beta x (1-x-y) + \alpha y (1-x-y).
\end{align*}
Since $x\geq y \geq 1-x-y$, if $\alpha+\beta +\gamma=S\leq  2$ then $e(G)/n^2$ is maximised by letting $\gamma =\min(S,1)$, $\beta=S- \gamma$, and $\alpha=0$, i.e. by making $G$ bipartite. But a bipartite graph contains at most $\frac{n^2}{4}$ edges, contradicting our lower bound on $e(G)$. Thus we assume $\alpha +\beta +\gamma=2+s$ for some $s$ with $0<s \leq 1$. Further, if $x,s$ are fixed with $x\geq y \geq 1-x-y$, then $e(G)/n^2$ is maximised by letting $\gamma =1$, $\beta=1$, $\alpha=s$ and $y=\frac{1-x}{2}$. 
In other words, we have
\begin{align} \label{inequality: upper bound on edge density of tripartite graph}
\frac{e(G)}{n^2}\leq  f_1(x,s):= x-x^2 +\frac{s}{4}(1-x)^2.
\end{align}
Since
 \begin{align*}
 \frac{\partial}{\partial x} f_1(x,s)&= 1- 2x -\frac{s}{2}(1-x) = \left(\frac{2-s}{2}\right) - \left(\frac{4-s}{2}\right)x,
 \end{align*}
when $s$ is fixed, $f_1(x,s)$ attains a maximum at $x_{\star}= \frac{2-s}{4-s}\in [\frac{1}{3}, \frac{1}{2}]$ (as $0\le s\le 1$). Consequently,  
\begin{align}\label{eq:eGf1}
\frac{e(G)}{n^2} \leq f_1(x,s)&\leq f_1(x_{\star}, s) =  \frac{1}{4-s}.
\end{align}

On the other hand, we can give a lower bound on $t_{\mathrm{max}}(G)/n^2$ as follows. Select vertices $a\in A$, $b\in B$ and $c\in C$ uniformly at random. By the union bound,
\begin{align*} 
\mathbb{P}(abc \textrm{ induces a triangle}) \geq \mathbb{P}(ab \in E(G)) - \mathbb{P}(bc\notin E(G))-\mathbb{P}(ac \notin E(G))= \alpha +\beta+\gamma-2=s.
\end{align*}
In particular, $G$ must contain at least $sxy(1-x-y)n^3$ triangles. By averaging over all vertices $c\in C$ we have 
\begin{align*}
\frac{t_{\mathrm{max}}(G)}{n^2}\geq  \frac{sxy(1-x-y) n^3}{\vert C\vert n^2} = sxy.
\end{align*}
Since $x\geq y \geq {1-x-y}$, for fixed $s$ and $x$, $sxy$ is minimised by setting $y=\frac{1-x}{2}$. Thus 
\begin{align} \label{inequality: lower bound on triangle-degree in tripartite graphs}
\frac{t_{\mathrm{max}}(G)}{n^2}\geq  f_2(x,s):= \frac{sx(1-x)}{2}.
\end{align}
Having done these preparatory work, we can now prove the theorem by using the following claim. 
\begin{claim}\label{clm:ts}
\begin{align*}
t_{\mathrm{max}}(G)\geq \left\{ \begin{array}{ll}
	e(G)-\frac{2}{9}n^2 & \textrm{if }   s\ge \frac{2}{3},\\
	\frac{3}{2}\left(e(G)-\frac{n^2}{4}\right) & \textrm{if } s< \frac{2}{3}.
\end{array}
\right.
\end{align*}
\end{claim}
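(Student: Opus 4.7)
The strategy is to combine the two inequalities established just above for the tripartite graph $G$ parametrised by $(x, s)$ with $x \in [1/3, 2/3)$, $s \in (0, 1]$: the upper bound $e(G)/n^2 \leq f_1(x, s)$ from \eqref{inequality: upper bound on edge density of tripartite graph} and the lower bound $t_{\mathrm{max}}(G)/n^2 \geq f_2(x, s)$ from \eqref{inequality: lower bound on triangle-degree in tripartite graphs}. It suffices to prove the two purely algebraic inequalities
\begin{align*}
(\mathrm{a}')&:\ \ f_2(x, s) \geq f_1(x, s) - \tfrac{2}{9} \ \text{ whenever } s \in [\tfrac{2}{3}, 1],\\
(\mathrm{b}')&:\ \ f_2(x, s) \geq \tfrac{3}{2} f_1(x, s) - \tfrac{3}{8} \ \text{ whenever } s \in (0, \tfrac{2}{3}),
\end{align*}
uniformly over $x \in [1/3, 2/3)$. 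Indeed, combining $(\mathrm{a}')$ with $e(G)/n^2 \leq f_1$ yields $t_{\mathrm{max}}/n^2 \geq f_2 \geq f_1 - 2/9 \geq e(G)/n^2 - 2/9$, which is the first bound of the claim; the deduction of the second bound from $(\mathrm{b}')$ is analogous.

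To prove $(\mathrm{a}')$, I would compute
\[ f_1(x, s) - f_2(x, s) = x(1-x) + \frac{s(1-x)(1-3x)}{4}. \]
Since $x \geq 1/3$, the factor $(1-3x)$ is non-positive, so the expression is non-increasing in $s$. Its maximum on $s \in [2/3, 1]$ is therefore attained at $s = 2/3$, where it simplifies to $(1-x)(3x+1)/6$. A short differentiation in $x$ shows this univariate function is maximised on $[1/3, 2/3]$ at the endpoint $x = 1/3$, with value exactly $2/9$.

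For $(\mathrm{b}')$, I would compute analogously
\[ \tfrac{3}{2}f_1(x, s) - f_2(x, s) = \frac{3x(1-x)}{2} + \frac{s(1-x)(3-7x)}{8}. \]
The coefficient of $s$ changes sign at $x = 3/7$, so I would split the range accordingly. On $[1/3, 3/7]$ the expression is non-decreasing in $s$, so its supremum over $s \in (0, 2/3)$ is attained as $s \to (2/3)^-$ and equals $(1-x)(11x+3)/12$, a concave quadratic in $x$ whose maximum on $[1/3,3/7]$ is $49/132$, attained at $x = 4/11$. On $[3/7, 2/3)$ the expression is non-increasing in $s$, so its supremum is attained as $s \to 0^+$ and equals $3x(1-x)/2$, maximised at $x = 1/2$ with value exactly $3/8$. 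Both sub-range maxima are at most $3/8$, establishing $(\mathrm{b}')$.

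The computations reduce to elementary optimisation of quadratics in $x$ with linear dependence on $s$, so no serious obstacle is anticipated. The only mildly delicate point is the case-split in $(\mathrm{b}')$, which is natural once one inspects the sign of $(3-7x)$, and reflects the fact that the extremal configurations in the two sub-ranges of $x$ are qualitatively different: an interior critical point with balanced parts near $x = 4/11$ when $s$ approaches $2/3$, versus the bipartite degeneration $|A| = n/2$, $|B|=|C|=n/4$ when $s$ approaches $0$.
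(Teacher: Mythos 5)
Your proposal is correct and follows essentially the same route as the paper: both combine the bounds $e(G)/n^2 \leq f_1(x,s)$ and $t_{\mathrm{max}}(G)/n^2 \geq f_2(x,s)$ and reduce the claim to elementary optimisation of $f_1-f_2$ (resp.\ $\tfrac{3}{2}f_1-\tfrac{3}{8}-f_2$), with the same case split at the sign of $(3-7x)$ in the second case. The only differences are cosmetic (e.g.\ you optimise in $s$ before $x$ in the first case, while the paper notes the value $2/9$ at $x=1/3$ directly), and your numerical checks ($49/132<3/8$, maximum $2/9$ at $x=1/3$) are accurate.
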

To see why Claim~\ref{clm:ts} implies Theorem~\ref{theorem: tripartite theorem}, first assume $e(G)\ge \frac3{10} n^2$. By \eqref{eq:eGf1}, we have $s\ge 2/3$. Then Claim~\ref{clm:ts} gives that $t_{\mathrm{max}}(G)\geq e(G)-\frac{2}{9}n^2$. Now assume $e(G)< \frac3{10} n^2$. If we still have $s\ge 2/3$, then by Claim~\ref{clm:ts},
\[
t_{\mathrm{max}}(G)\geq e(G)-\frac{2}{9}n^2 >  \frac{3}{2}\left(e(G)-\frac{n^2}{4}\right)
\]
because $e(G)< \frac{11}{36} n^2$. Otherwise $s< 2/3$ and Claim~\ref{clm:ts} implies that $t_{\mathrm{max}}(G)\ge \frac{3}{2}\left(e(G)-\frac{n^2}{4}\right)$, as desired.
\end{proof}

\begin{proof}[Proof of Claim~\ref{clm:ts}]
{\ }

\noindent \textbf{Case 1: $s\geq \frac{2}{3}$.}  By inequalities (\ref{inequality: upper bound on edge density of tripartite graph}) and (\ref{inequality: lower bound on triangle-degree in tripartite graphs}), we have
\begin{align*} 
	\frac{e(G)}{n^2}-\frac{t_{\mathrm{max}}(G)}{n^2} & \leq f_1(x,s) - f_2(x,s)= x-x^2+\frac{s}{4}(1-x)^2 - \frac{s}{2}x(1-x)
\end{align*}

It is an easy exercise in calculus to show that as a function of $x\in (0,1)$, the right-hand side is maximized at $x_{\star}=\frac{2-2s}{4-3s}\le \frac 13$ (as $s\geq \frac{2}{3}$), and is decreasing in $[x_{\star}, +\infty)$. Under our assumption $x\ge 1/3$, we thus have
\begin{align*}
f_1(x,s) - f_2(x,s) \le f_1\left(\tfrac{1}{3},s\right) - f_2\left(\tfrac{1}{3},s\right)= \frac{2}{9}.
\end{align*}
This implies that $t_{\mathrm{max}}(G)\geq e(G)-\frac{2}{9}n^2$.

\noindent \textbf{Case 2: $0<s < \frac{2}{3}$.} 
By inequalities (\ref{inequality: upper bound on edge density of tripartite graph}) and (\ref{inequality: lower bound on triangle-degree in tripartite graphs}) we have
\begin{align}\label{inequality: 1.5 e -t bound, case s <2/3}
\frac{3}{2}\frac{e(G)}{n^2} -\frac{3}{8}- \frac{t_{\mathrm{max}}(G)}{n^2}&\leq \frac{3}{2}f_1(x,s) -\frac{3}{8} - f_2(x,s)= \frac{3}{2}\left( x(1-x)-\frac{1}{4}\right) + \frac{s}{8}(1-x)\left(3-7x \right).
\end{align}
If $x\in [\frac{3}{7},1]$, then both terms on the right-hand side are non-positive. Assume now that $x \in [\frac{1}{3}, \frac{3}{7})$. Then for such values of $x$, the right-hand side is an increasing function of $s$.  Applying our assumption on $s$, its value is at most
\begin{align*}
\frac{3}{2}f_1(x,\tfrac{2}{3}) -\frac{3}{8} - f_2(x,\tfrac{2}{3})&=-\frac{1}{8} + \tfrac{2}{3}x-\frac{11}{12}x^2.
\end{align*}
The discriminant of this quadratic is $\frac{4}{9} -4\cdot \frac{1}{8}\cdot\frac{11}{12}=-\frac{1}{72}<0$, so the expression above is (strictly) non-positive. We deduce that the right-hand side of (\ref{inequality: 1.5 e -t bound, case s <2/3}) is non-positive for every value of $x\in [0,1]$.  This yields $t_{\mathrm{max}}(G)\geq \frac{3}{2} \left(e(G)-\frac{1}{4}n^2\right)$.
\end{proof}

\subsection{Flag algebra bounds}\label{subsection: flag algebra bounds}
In this section we will employ  Razborov's~\cite{Razborov07} flag algebra framework, and more specifically his semidefinite method, to obtain bounds for some of the problems we study.  The semi-definite method has become a fairly standard tool in extremal combinatorics --- see e.g.~\cite{Razborov13} for a survey of some of the early applications.  As the method is well established and we have only obtained non-sharp bounds using it, we give only minimal details here, without expounding on the underlying theoretical machinery.

We have used \emph{Flagmatic} to perform our flag algebra computations; this is an open source program written by  Emil Vaughan, and later developed further by Jakub Sliacan~\cite{Sliacan}, who currently maintains a Flagmatic page on GitHub~\cite{Sliacan}.   We have used Vaughan's Flagmatic 2.0 in this paper. We refer the reader to~\cite{FalgasRavryVaughan13} and to the Flagmatic 2.0 section on the webpage~\cite{Sliacan}  for a description of the inner workings of \emph{Flagmatic} and download links for the program.  Our calculations involve the use of flag inequalities given as `axioms'. The use of such `axioms' first appeared in~\cite{FalgasRavryMarchantPikhurkoVaughan15}, where an edge-maximisation problem was solved subject to a codegree constraint. We refer a reader interested in the details to either Section~3 in that paper or to the Flagmatic 2.0 webpage~\cite{Sliacan}.

Let $T_{1}$ denote the $([1], \emptyset)$-flag consisting of a triangle with one vertex labelled $1$. Let $\rho$ denote the $(\emptyset, \emptyset)$-flag consisting of a single $2$-edge (this flag corresponds to the edge density). Let $f(\rho)$ be denote the upper bound on $\tau(\rho)$ given in Theorem~\ref{theorem: upper bounds on tau}.

The function $f(\rho)$ is piecewise linear, continuous and strictly increasing in the interval $(\frac{1}{2}, 1]$. In particular, it has a piecewise linear inverse. Over any subinterval $I\subseteq[\frac{1}{2}, 1]$ on which is $f$ is linear, we can use semidefinite method to obtain an upper bound on how much $\tau(\rho)$ can deviate from $f(\rho)$  on $I$ by  giving an upper bound for the following problem.
\begin{problem}\label{problem: defect}
Maximise $\rho-f^{-1}(y)$ over $y\in f(I)$ subject to the constraint $T_{1}\leq y$.
\end{problem}
Note the constraint we have given corresponds to requiring that all but $o(1)$ proportion of the vertices have triangle-degree at most $y\frac{n^2}{2}+o(n)$ (which is slightly weaker than what we require for $\tau$). A standard flag algebra computation will give us an upper bound $\varepsilon_I>0$ on the solution to Problem~\ref{problem: defect}. If $f(x)= a x +b$ over the interval $I$, then this tells us that $f(x-\varepsilon_I)= a(x-\varepsilon_I )+b$ is a lower bound for $\tau(x)$ on the interval $I-\varepsilon_I:=\{x\in I: \ x\leq \sup I -\varepsilon_I\}$, i.e that $f(\rho)$ is a most $a\varepsilon_I$ away from the true value of $\tau(\rho)$ on $I-\varepsilon$. Using this technique, we obtain the following:
\begin{theorem}\label{theorem: flag algebra bounds}
\[\tau(\rho) \geq \left\{\begin{array}{ll}
f(\rho)-0.0010705 & \textrm{if }\rho \in [\frac{1}{2}, \frac{29}{54}]\\ 
f(\rho)-0.0044863 & \textrm{if }\rho \in [\frac{29}{54}, \frac{31}{54}]\\
f(\rho)-0.0106917 & \textrm{if }\rho \in [\frac{31}{54}, \frac{11}{18}]\\
f(\rho)-0.0106917 & \textrm{if }\rho \in [\frac{11}{18}, \frac{17}{27}]\\
f(\rho)-0.0058198 & \textrm{if }\rho \in [\frac{17}{27}, \frac{35}{54}]\\
f(\rho)-0.0002057 & \textrm{if }\rho \in [\frac{35}{54}, \frac{2}{3}]\\
f(\rho)-0.00123143 & \textrm{if }\rho \in [\frac{2}{3}, \frac{25}{36}]\\
f(\rho)-0.00534603& \textrm{if }\rho \in [\frac{25}{36}, \frac{13}{18}]\\
f(\rho)-0.00534583 & \textrm{if }\rho \in [\frac{13}{18}, \frac{53}{72}]\\
f(\rho)-0.00189005 & \textrm{if }\rho \in [\frac{53}{72}, \frac{3}{4}]

\end{array} \right.\]	
\end{theorem}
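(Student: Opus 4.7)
The proof is computational, built on the semidefinite method for flag algebras introduced by Razborov~\cite{Razborov07} and already outlined just before the theorem statement. The plan is to solve Problem~\ref{problem: defect} once for each of the ten subintervals $I$ listed in the statement, extract an upper bound $\varepsilon_I$ on its optimum from a semidefinite programming certificate, and then convert $\varepsilon_I$ into a lower bound $\tau(\rho)\geq f(\rho)-a_I\varepsilon_I$ valid on the appropriate part of $I$, where $a_I$ is the local slope of the piecewise linear function $f$.

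For a fixed interval $I$, I would run Flagmatic 2.0 on $2$-graph flags up to some modest size $N$ (one would try $N=7$ first and increase only if needed) with the single-vertex type $\sigma$. The two flags of interest are $\rho$ (the unlabelled edge, representing edge density) and the $([1],\emptyset)$-flag $T_{1}$ counting triangles through the labelled vertex, suitably normalised. The constraint $T_{1}\leq y$ for the target value $y=f(\sup I)$ would be encoded in the SDP as an axiom, following~\cite{FalgasRavryMarchantPikhurkoVaughan15}. Flag algebra duality then produces a positive semidefinite certificate yielding an upper bound on the asymptotic density $\rho$ achievable by any graph whose labelled-vertex averaged triangle-degree is at most $y$; by the definition of $f^{-1}$ on $f(I)$, this is precisely the quantity $\varepsilon_I$ sought in Problem~\ref{problem: defect}.

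The translation from $\varepsilon_I$ to the stated inequalities is the elementary step recorded just before the theorem: on an interval where $f(x)=a_I x+b_I$, the defect $\varepsilon_I$ implies $\tau(\rho)\geq f(\rho-\varepsilon_I)=f(\rho)-a_I\varepsilon_I$ for $\rho\leq \sup I -\varepsilon_I$. The ten intervals are chosen precisely so that $f$ is linear on each: their breakpoints are the values $\tfrac{r-1}{r}$, $\tfrac{r}{r+1}-\tfrac{1}{3r(r+1)}$, and $\tfrac{r}{r+1}$ for $r=2,3$ (which lie in the range $[\tfrac{1}{2},\tfrac{3}{4}]$), further subdivided so that consecutive intervals overlap by a margin of at least $\varepsilon_I$ and thus cover the whole range $[\tfrac{1}{2},\tfrac{3}{4}]$ without gaps. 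The decimal constants in the table can then be read off directly from the numerical SDP output after upward rounding of the dual certificate.

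The main obstacle is not mathematical but computational and numerical. The weakness of the averaged axiom encoding of the pointwise constraint $T_{1}\leq y$ noted in the comment preceding the theorem means that the SDPs are \emph{a priori} not expected to produce sharp bounds, so one must verify that the extracted $\varepsilon_I$ are nonetheless small enough to match the decimal constants in the statement. In addition, one must handle floating-point precision carefully and confirm, via the standard rounding-and-rank-reduction procedure, that the numerical certificates can be converted into rigorous (if not exactly rational) upper bounds on Problem~\ref{problem: defect}. These are routine but delicate steps in any Flagmatic-based computation.
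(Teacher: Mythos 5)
Your overall strategy coincides with the paper's: its proof of this theorem is exactly the recipe set out before the statement --- for each interval $I$ on which $f$ is linear, bound the optimum of Problem~\ref{problem: defect} by a semidefinite flag-algebra computation (run in Flagmatic with the triangle-degree condition encoded as an axiom), then convert the resulting defect $\varepsilon_I$ into $\tau(\rho)\geq f(\rho)-a_I\varepsilon_I$ using the linearity of $f$; the paper's proof simply consists of running the script \texttt{theorem38.sage}. However, there is a concrete flaw in the way you propose to encode the problem: you fix the constant in the axiom at the single value $y=f(\sup I)$. Writing $g(y)$ for the asymptotically largest edge density of an $n$-vertex graph in which all but $o(n)$ vertices have triangle-degree at most $y\,n^2/2$, your single SDP bounds $g\bigl(f(\sup I)\bigr)-\sup I$, which is only \emph{one} term of the maximum over $y\in f(I)$ defining Problem~\ref{problem: defect}; it is a lower bound for that optimum, not an upper bound, so the conversion step that follows is unjustified. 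Concretely, the statement ``$T_{1}\leq f(\sup I)$ implies $\rho\leq \sup I+\varepsilon$'' gives, in contrapositive, only that graphs of density exceeding $\sup I+\varepsilon$ contain a vertex of triangle-degree at least $f(\sup I)\,n^2/2$; it yields no lower bound on $\tau(\rho)$ for $\rho$ in the interior of $I$, since for such $\rho$ the hypothesis $\rho>\sup I+\varepsilon$ is never met. (Using the left endpoint instead would give a bound valid on all of $I$, but only with error $a_I\vert I\vert$, of order $10^{-2}$ --- far larger than the constants claimed.)

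What the theorem requires, and what Problem~\ref{problem: defect} actually asks for, is a bound on $\rho-f^{-1}(y)$ that holds \emph{uniformly} in $y\in f(I)$: the constant $y$ must be carried as a parameter of the computation, for instance by keeping the axiom $T_{1}\leq y$ with $y$ indeterminate and incorporating the linear term $-f^{-1}(y)=-(y-b_I)/a_I$ into the objective, so that the dual certificate controls the coefficient of $y$ and delivers ``$T_{1}\leq y\Rightarrow \rho\leq f^{-1}(y)+\varepsilon_I$'' simultaneously for all $y$ in the range; this is what the paper's script implements. With that correction (and noting that, as in the paper, the resulting bound is only claimed on $I$ shifted by $\varepsilon_I$, the listed intervals being contiguous rather than overlapping as you describe), the remainder of your outline --- choice of flag order, axiom encoding following the codegree-constraint precedent, rounding of the numerical certificates, and reading off the decimal constants --- matches the paper's computation.
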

\begin{proof}
	The theorem follows from standard algebra computations using the method outline above. Running the script \texttt{theorem38.sage} which is found in the  auxiliary files of this ArXiv submission on Flagmatic 2.0 yields the bounds claimed above. (The resulting computation certificates are somewhat large, but the computation itself can easily be run on a modern laptop computer.)
\end{proof}

We also `zoom in'  on the value $\rho=\rho_{\star}$ at which $\tau (\rho)$ becomes greater than $1-\rho$, and which we conjecture is equal to $\frac{19}{27}$. This is done by giving an upper bound for the following variant of Problem~\ref{problem: defect}:
\begin{problem}\label{problem: pointwise bound}
	Maximise $1-x -\rho$ subject to the constraint $T_{1}\leq x$.
\end{problem}
Suppose for some fixed $x$ we perform a flag algebra calculation and get a non-positive upper bound for the solution to Problem~\ref{problem: pointwise bound} . This implies that any $n$-vertex graph with at least $(1-x)\frac{n^2}{2}+o(n^2)$ edges must have a positive proportion of its vertices having triangle-degree greater than $x\frac{n^2}{2}+o(n^2)$. In particular we must have $\rho_{\star}\leq x$. Using this technique, we obtain the following bounds on $\rho_{\star}$
\begin{proposition}\label{prop: pointwise flag algebra bounds}
\[\rho_{\star}\leq \frac{19}{27}+7.4\times 10^{-9}.\]
\end{proposition}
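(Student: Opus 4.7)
The plan is a direct application of Razborov's semidefinite flag algebra method, run at the single parameter value $x_0 := \tfrac{8}{27}-7.4\times 10^{-9}$, following exactly the template laid out in the paragraph preceding the proposition. The aim is to certify that any limit graph satisfying $T_1\le x_0$ has edge density $\rho\le 1-x_0=\tfrac{19}{27}+7.4\times 10^{-9}$; by the contrapositive argument given in the surrounding discussion, this yields $\rho_\star\le \tfrac{19}{27}+7.4\times 10^{-9}$.

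First I would set up the computation in the flag algebra of $2$-graphs, taking $\sigma$ to be the type consisting of a single labelled vertex. The rooted flag $T_1$ (a triangle containing the labelled vertex) satisfies $p(T_1;G,v)=t_G(v)/\binom{n-1}{2}+o(1)$, so the inequality $T_1\le x_0$ is exactly the flag-algebraic form of the hypothesis on triangle-degrees, introduced as an ``axiom'' in the sense of \cite{FalgasRavryMarchantPikhurkoVaughan15}. I would then apply Flagmatic~2.0: truncate to a finite basis of $N$-vertex flags and solve the resulting SDP to find a non-negative multiplier $\lambda$ for the axiom, together with PSD matrices $Q_\sigma$ indexed by the relevant types $\sigma$, whose combination certifies $(1-x_0)-\rho\ge 0$ modulo $T_1\le x_0$. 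The last step is to round the approximate SDP solution to exact rationals and verify the resulting identity by exact arithmetic, in the now-standard rigorous fashion; the Flagmatic script is entirely analogous to the \texttt{theorem38.sage} already included in the paper's auxiliary files.

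The main obstacle is numerical precision. The lower-bound construction from Theorem~\ref{theorem: k4 bounds} attains $\rho=\tfrac{19}{27}$ with equality, so the SDP certificate must be sharp to within $7.4\times 10^{-9}$. This forces $N$ to be large enough that the $N$-vertex flag densities of the conjectured extremal graph (three parts of size $n/3$, each further subdivided into halves of size $n/6$) are resolved with the necessary resolution, and it demands careful conditioning of the SDP so that the rounding step does not destroy feasibility. I expect $N=7$ already to suffice, with $N=8$ as a fallback. Modulo these precision issues, which is why the authors describe the computation as ``simple'', the proof is essentially the output of the Flagmatic run together with the exact-arithmetic verification of the rounded certificate.
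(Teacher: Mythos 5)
Your proposal is essentially the paper's own proof: the paper establishes this proposition precisely by the Flagmatic~2.0 semidefinite computation you describe (its script \texttt{theorem310.sage}), i.e.\ imposing the axiom $T_{1}\le x_0$ with $x_0=\tfrac{8}{27}-7.4\times 10^{-9}$ and certifying that the edge density is at most $1-x_0$, which via the contrapositive argument preceding the proposition yields $\rho_{\star}\le \tfrac{19}{27}+7.4\times 10^{-9}$. The only caveat, shared with the paper, is that the result rests on the SDP run and the exact-arithmetic rounding of the certificate actually succeeding, rather than on any further mathematical argument.
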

\begin{proof}
	The theorem follows from standard algebra computations using the method outline above. Running the script \texttt{theorem310.sage} which is found in the  auxiliary files of this ArXiv submission on Flagmatic 2.0 yields the bounds claimed above. (This is a much smaller computation than the one required for Theorem~\ref{theorem: flag algebra bounds}.)
\end{proof}

\section{Concluding remarks}\label{section: concluding remarks}

In earlier sections we showed that 
\[
c_1(K_4^{(3-)})= \frac{\sqrt{13}-1}{6}, \quad \frac{19}{27} \le c_1(K_4^{(3)})\le \frac{19}{27} + 7.4\times 10^{-9}  , \quad \mathrm{and} \quad
\frac{5}{9}\leq c_1(C_5^{(3)})\leq 2-\sqrt{2}. 
\]
We conjecture that $c_1(K_4^{(3)})= \frac{19}{27}$ and $c_1(C_5^{(3)})=\frac{5}{9}$.

\subsection{Book numbers of graphs}\label{subsection: book numbers}
In Section~\ref{section: triangle-degree}, we investigated the following question: let $G$ be a graph on $n$ vertices with $m>\mathrm{ex}(n, K_3^{(2)})$ edges. What is the largest $t$ such that $G$ must have some \emph{vertex} contained in at least $t$ triangles? A different but equally natural question is to ask: what is the largest $b$ such that $G$ must have some \emph{edge} contained in at least $b$ triangles?  This is in fact a well-studied problem in graph theory.
\begin{definition}\label{def: tbook number}
	Let $G$ be a $2$-graph, and $xy\in E(G)$. The \emph{book size} of $xy$ in $G$ is
	$\mathrm{bk}(xy)=\mathrm{bk}_G(xy):= \vert \Gamma(x)\cap \Gamma(y)\vert$, 
	the number of triangles in $G$ containing the edge $xy$.The \emph{book number} of $G$ is
	\[\mathrm{bk}(G) :=\max\{\mathrm{bk}(xy): \ xy\in E(G)\}.\]
\end{definition}
The study of book numbers in graphs was initiated by Erd{\H o}s in 1962~\cite{Erdos62}, and has attracted considerable attention in extremal graph theory and Ramsey theory. Set 
\[\beta(n, m):=\min \{\mathrm{bk}(G): \ v(G)= n, \ e(G)=m\},\]
and  
\[\beta(x):=\inf_n \{\mathrm{bk}(G)/n: v(G)=n,  \ e(G)\geq  x{n \choose 2} \},\]

Erd{\H o}s conjectured that $\beta(n, \mathrm{ex}(n, K_3^{(2)}) +1 )>\frac{n}{6}$. This was proved by Edwards~\cite{Edwards77}  and independently by Khad{\v z}iivanov--Nikiforov~\cite{KhadziivanovNikiforov79}. Bollob\'as and Nikiforov~\cite{BollobasNikiforov05} determined $\beta(n,m)$ exactly for infinitely many value of $m$ with $\frac{n^2}{4}<m < \frac{n^2}{3}$. 

A construction giving the best known lower bound on $\beta(n,m)$ was given by Erd{\H o}s, Faudree and Gy\"ori ~\cite{ErdosFaudreeGyori95}, generalising an earlier construction due to Erd{\H os}, Faudree and Rousseau~\cite{ErdosFaudreeRousseau94}.
\begin{construction}[Erd{\H os}, Faudree and Gy\"ori~\cite{ErdosFaudreeGyori95}]\label{construction: even better extremal graphs for book number} Suppose $n=r_1\cdot r_2\cdot r_3\cdots r_{k-1} \cdot r_kt$, where $r_1, r_2, \ldots, r_k, t$ are strictly positive integers satisfying $(r_{i-1}-1)^2< r_i$ for every $i\in [k]$. Set 
	\[V=\{ (i_1, i_2, \ldots , i_k, i_{k+1}): \ i_j \in [r_j] \textrm{ for all } j \in [k], \ i_{k+1} \in [t]\}.\] 
	Define a graph $G$ on $V$ by joining pairs of vectors from $V$ by an edge if and only if they differ in each of the first $k$ coordinates.
\end{construction}
This construction gives rise to a $d$-regular graph with book number $b$, where $d=\prod_{i=1}^k \left(\frac{r_i -1}{r_i}\right)n$ and $b=\prod_{i=1}^k \left(\frac{r_i -2}{r_i}\right)n$. Erd{\H o}s, Faudree and Gy\"ori  conjectured this gives the correct behaviour for the minimum value of the book number in graphs subject to a minimum degree condition.
\begin{conjecture}[Erd{\H o}s, Faudree and Gy\"ori~\cite{ErdosFaudreeGyori95}]
	\label{EFG conjecture: minimum book number}
	Let $x \in \mathbb{Q}$ with $\frac{1}{2}<x<1$. Let 
	\[x= \prod_{i=1}^k \frac{r_i-1}{r_i}\]
	with $3\leq r_1$ and $(r_{i-1}-1)^2<r_i$ for $2\leq i \leq k$ be the (unique) ``greedy representation'' of $x$. Set 
	\[b(x)= \prod_{i=1}^k \frac{r_i-2}{r_i}.\]
	Then every graph on $n$ vertices with minimum degree $d\geq xn$ has book number at least $b(x)n$.	
\end{conjecture}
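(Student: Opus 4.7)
The plan is to attack the conjecture by induction on the length $k$ of the greedy representation of $x$. The base case $k=1$, with $x=(r-1)/r$ for some $r\geq 3$, is immediate: for any edge $uv$ of $G$, inclusion-exclusion gives $|\Gamma(u)\cap\Gamma(v)| \geq |\Gamma(u)|+|\Gamma(v)|-n \geq 2\delta(G)-n \geq (r-2)n/r = b(x)n$, so every edge already supports a book of the required size.

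For the inductive step, with $x = \prod_{i=1}^{k}(r_i-1)/r_i$ and $k\geq 2$, the naive inclusion-exclusion bound falls short: a short calculation shows $2x-1 < b(x)$ whenever $k\geq 2$ (indeed $b(x)-(2x-1) = 2/(r_1 r_2)$ when $k=2$). A natural next attempt is an averaging argument based on the identity $\sum_{uv\in E(G)}|\Gamma(u)\cap\Gamma(v)|=3t(G)$, where $t(G)$ denotes the triangle count of $G$. A direct calculation in the Erd{\H o}s--Faudree--Gy\"ori construction shows that every edge supports a book of size \emph{exactly} $b(x)n$, so the average book size there equals the maximum, and the conjecture reduces to establishing the triangle lower bound $t(G)\geq b(x)n\cdot e(G)/3$ under the hypothesis $\delta(G)\geq xn$.

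The principal obstacle is that Razborov's sharp triangle-density inequality is \emph{not} strong enough to yield this bound: the Razborov-extremal graphs (iterated blow-ups of a clique with one additional class) are structurally very different from the Kneser-like Erd{\H o}s--Faudree--Gy\"ori constructions and contain strictly fewer triangles at the same edge density. The only way around this that I can see is to exploit the extra information provided by the minimum-degree hypothesis (beyond just edge-density), using it to force $G$ to be close to an $r_1$-partite near-blow-up and then peeling off the outer level to induct on the bipartite pieces. But even in the simplest post-base case $(r_1,r_2)=(3,5)$ --- where $x=8/15$ and $b(x)=1/5$ --- the minimum-degree threshold $8n/15$ lies strictly below the Andr\'asfai--Erd{\H o}s--S\'os $K_4$-containment threshold $5n/8$, so no classical stability theorem applies directly. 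Combined with the highly non-monotone behaviour of $b(x)$ at the points $x=(r-1)/r$ where the greedy representation changes length --- a phenomenon reminiscent of the extremal instability noted in the remark following Construction~\ref{construction: downwards} --- this makes it doubtful that stability or flag-algebra techniques alone can close the gap. Some genuinely new idea seems to be required, which is presumably why the conjecture has remained open.
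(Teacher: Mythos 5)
You were asked about Conjecture~\ref{EFG conjecture: minimum book number}, which is not a result of this paper at all: it is quoted from Erd{\H o}s, Faudree and Gy\"ori (1995) and stated here as an open problem, accompanied only by Construction~\ref{construction: even better extremal graphs for book number} showing that the bound $b(x)n$ would be best possible. There is therefore no proof in the paper to compare yours against, and your submission, correctly, does not claim to supply one. What you do establish is sound: the base case $k=1$ follows at once from $\mathrm{bk}(uv)\ge \deg(u)+\deg(v)-n\ge (2x-1)n=b(x)n$ when $x=(r-1)/r$; your computation that this inclusion--exclusion bound misses $b(x)n$ by exactly $2/(r_1r_2)$ when $k=2$ is right; and so is the observation that in the Erd{\H o}s--Faudree--Gy\"ori construction every edge has book size exactly $b(x)n$, so any averaging-based approach must be tight there.

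Two caveats on the rest of your analysis. First, saying the conjecture ``reduces to'' the triangle bound $t(G)\ge b(x)n\,e(G)/3$ overstates the logic: via $\sum_{uv\in E(G)}\mathrm{bk}(uv)=3t(G)$ that bound would indeed force the average, hence the maximum, book size to be at least $b(x)n$, but it is a strictly stronger assertion than the conjecture and could fail even if the conjecture is true; it is a possible line of attack, not a reduction. Second, and substantively, everything beyond the base case is a catalogue of obstacles rather than an argument: the proposed step of forcing $G$ to be near an $r_1$-partite blow-up and peeling off a level is never carried out, and, as you note yourself, Razborov's edge-density triangle bound, Lo's minimum-degree triangle bound, and Andr\'asfai--Erd{\H o}s--S\'os-type stability thresholds all fall short of the required range (your example $x=8/15$ versus the $5n/8$ threshold is apt). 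So, judged as a proof, the entire inductive step for $k\ge 2$ is missing; judged as an assessment, your conclusion agrees with the paper's stance: the conjecture, like its edge-density variant Conjecture~\ref{conjecture: exact size for book number in terms of edge number}, remains open, supported only by constructions and flag-algebra evidence.
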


We believe that the minimum degree condition in Conjecture~\ref{EFG conjecture: minimum book number} can be replaced by a size condition, and this belief seemed to be borne out by flag algebra computations we ran for this problem.
\begin{conjecture}\label{conjecture: exact size for book number in terms of edge number}
	Let $x \in \mathbb{Q}\cap (\frac{1}{2}, 1)$ and $b(x)$ be  as above. Then  $\beta(x)=b(x)$, i.e. any graph on $n$ vertices with at least $x\frac{n^2}{2}$ edges has book number at least $b(x)n$.
\end{conjecture}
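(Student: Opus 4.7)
The plan is to reduce the edge-density hypothesis to the minimum-degree hypothesis of Conjecture~\ref{EFG conjecture: minimum book number} by a degree-peeling argument. Given an $n$-vertex $G$ with $e(G) \geq x\binom{n}{2}$, I would iteratively delete a vertex of smallest degree, producing a nested sequence $G = G_0 \supseteq G_1 \supseteq \cdots$ and stopping at the first induced subgraph $G^* = G_k$ satisfying $\delta(G^*) \geq x\, v(G^*)$. Standard calculations show that the edge density is essentially preserved during peeling (since at each step we remove a vertex of below-average degree), so $G^*$ remains a dense induced subgraph of linear size. Granting Conjecture~\ref{EFG conjecture: minimum book number}, $G^*$ then contains an edge $uv$ with $|\Gamma_{G^*}(u) \cap \Gamma_{G^*}(v)| \geq b(x)\, v(G^*)$, and since deleting vertices cannot create common neighbors of a surviving edge, the same edge $uv$ of $G$ satisfies $\mathrm{bk}_G(uv) \geq b(x)\, v(G^*)$.

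The main obstacle in this approach is to ensure $v(G^*) \geq n - o(n)$: a naive peeling may delete a constant fraction of vertices and thereby lose a constant factor in the final bound. Overcoming this would likely require a sharper, density-aware peeling that prioritises vertices lying in ``defect'' parts of the graph, combined with a stability analysis around the Erd{\H o}s--Faudree--Gy\H{o}ri extremal construction. Roughly, the dichotomy to aim for is: either only $o(n)$ deletions suffice and we conclude directly, or the many deletions have destroyed so many edges that $G$ must itself be structurally close to the extremal blow-up, whereupon a direct combinatorial analysis of that near-extremal configuration should close the remaining gap.

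A second line of attack that avoids Conjecture~\ref{EFG conjecture: minimum book number} altogether is induction on the length $k$ of the greedy representation $x = \prod_{i=1}^k (r_i - 1)/r_i$. The base case $k = 1$, in which $x = (r-1)/r$, follows from averaging combined with Razborov's theorem on minimum triangle density: the unique extremum at edge density $(r-1)/r$ is the Tur\'an graph $T(n,r)$, whose triangle density is $(r-1)(r-2)/r^2$, and consequently the average book size $3t(G)/e(G)$ over edges of $G$ is at least $(r-2)n/r = b(x)n$, so some edge attains this value. The inductive step would then pass to the neighborhood $G[N(v)]$ of a carefully chosen vertex $v$ of degree $\approx xn$ and apply the induction hypothesis there, before lifting an edge of large book number in $G[N(v)]$ to one in $G$. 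The serious difficulty is that even inside the extremal EFG graph, $G[N(v)]$ has edge density $\prod_{i=1}^k (r_i-2)/(r_i-1)$ rather than $x' = \prod_{i \geq 2} (r_i-1)/r_i$, so the induction hypothesis for $x'$ does not apply directly to $G[N(v)]$; a stronger inductive invariant simultaneously tracking the relevant densities at every level of the greedy representation would be needed.

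In either approach, the underlying conceptual difficulty is that the EFG extremal construction is a highly rigid iterated blow-up whose structure must be exploited more carefully than either averaging or a naive induction permits. Given the authors' report that flag-algebra computations are consistent with the conjecture, the cleanest route may in fact be a rigorous flag-algebra argument yielding symbolic matching lower bounds on the maximum book number for all $x$ in a suitable range, combined with a continuity argument covering the remaining rationals in $(1/2,1)$ with finite greedy representation. Extracting clean symbolic bounds (as opposed to numerical bounds at isolated values of $x$) from the associated semidefinite program would be the main technical obstacle along that route.
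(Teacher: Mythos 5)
The statement you are trying to prove is Conjecture~\ref{conjecture: exact size for book number in terms of edge number}: the paper offers no proof of it (nor of Conjecture~\ref{EFG conjecture: minimum book number}, on which your first approach relies), only the remark that flag algebra computations are consistent with it. So there is no proof in the paper to compare against, and your text, by its own account, is a research plan rather than a proof: each route you sketch stops at an obstacle you explicitly leave open.

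Concretely, the gaps are these. Your peeling reduction is doubly incomplete: it is conditional on the unproven minimum-degree Conjecture~\ref{EFG conjecture: minimum book number}, and even granting that conjecture, peeling at threshold $x$ only guarantees a surviving subgraph $G^*$ on $c\,n$ vertices for some constant $c<1$ (the edge-count bookkeeping $e(G_k)\geq x\binom{n}{2}-x\sum_{j>m} j$ gives no bound forcing $v(G^*)=n-o(n)$), so the conclusion would be $\mathrm{bk}(G)\geq b(x)\,c\,n$, strictly weaker than $\beta(x)=b(x)$; the ``density-aware peeling plus stability around the Erd\H{o}s--Faudree--Gy\H{o}ri construction'' that would repair this is named but not constructed, and no such stability theorem is available. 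Note also that the conjecture as stated has no $o(n)$ slack, so any reduction losing lower-order terms needs extra care. In the second approach, the base case $k=1$ requires more than the single evaluation you make at $\rho=\frac{r-1}{r}$: for graphs with $e(G)>x\frac{n^2}{2}$ you need the pointwise inequality $g(\rho)\geq \rho\cdot\frac{r-2}{r}$ for the minimal triangle density $g$ on all of $[\frac{r-1}{r},1]$, which you assert only implicitly; and, more importantly, you yourself identify that the inductive step fails, since the neighbourhood $G[N(v)]$ in the extremal iterated blow-up has edge density $\prod_i\frac{r_i-2}{r_i-1}$ rather than the density $\prod_{i\geq 2}\frac{r_i-1}{r_i}$ to which the inductive hypothesis would apply, and the ``stronger inductive invariant'' needed to bridge this is not formulated. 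The flag-algebra route is likewise only a suggestion. As it stands, then, the proposal establishes nothing beyond (at best) the $k=1$ instance, and the conjecture remains open.
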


\subsection{Maximal triangle-degree, book number and triangle density}
In Sections~\ref{section: triangle-degree} and ~\ref{subsection: book numbers}, we discussed the maximum triangle-degree of a vertex and the book number (i.e. maximum triangle-degree of an edge) in graphs, giving conjectures on their minimum value for a given edge-density or minimum degree condition. Here we compare the conjectured behaviour of these two triangle-related extremal quantities with each other and with the minimal triangle density in graphs $G$ with $\rho \binom{n}{2}$ edges for $\frac{1}{2}\leq \rho \leq \frac{2}{3}$.

Razborov~\cite{Razborov08} showed that such a graph $G$ contain at least $\kappa(\rho)\binom{n}{3}+o(n^3)$ triangles, where
\[\kappa(\rho) =\frac{1}{6} \left(1- \sqrt{2(2-3\rho)}\right) \left(2+\sqrt{2(2-3\rho)}\right)^2.\]
In addition, Lo~\cite{Lo12} showed that if the minimum degree of $G$ is at least $\rho n$, then it contains at least $\lambda(\rho)\binom{n}{3}+o(n^3)$ triangles, where 
	\[\lambda(\rho) =   3\rho (1-\rho)(2\rho -1).   \]	
Conjecture~\ref{conjecture: tau value} implies that every $n$-vertex graph with edge density $\rho$ contains a vertex with triangle-degree at least $\tau'(\rho)\binom{n}{2}+o(n^2)$, where
\[\tau'(\rho)= \left\{ \begin{array}{ll}
\frac{3}{2}\left(\rho-\frac{1}{2}\right) & \textrm{if }   \frac{1}{2}\leq \rho\leq \frac{11}{18},\\
\rho-\frac{4}{9} & \textrm{if }   \frac{11}{18}\leq \rho\leq \frac{2}{3}.
\end{array}
\right. \]
Finally, let $\beta'(x)$ denote  the function obtained by extending the function $b(x)$ from Conjecture~\ref{EFG conjecture: minimum book number} from the rationals in $(\frac{1}{2}, \frac{2}{3}]$ to a monotonically increasing left-continuous function on the whole interval. 
This last function unfortunately does not have a nice closed form, but we can plot an approximation of it (or rather: $\rho \beta'(\rho)$) along the other three in Figure~\ref{plot1}, allowing for a visual comparison of the four functions $\kappa$, $\lambda$, $\tau'$ and $\rho \beta'$ in the interval $\rho\in (\frac{1}{2}, \frac{2}{3}]$. 
\begin{figure}\label{figure}
	\begin{center}
		\includegraphics[width=0.75\textwidth]{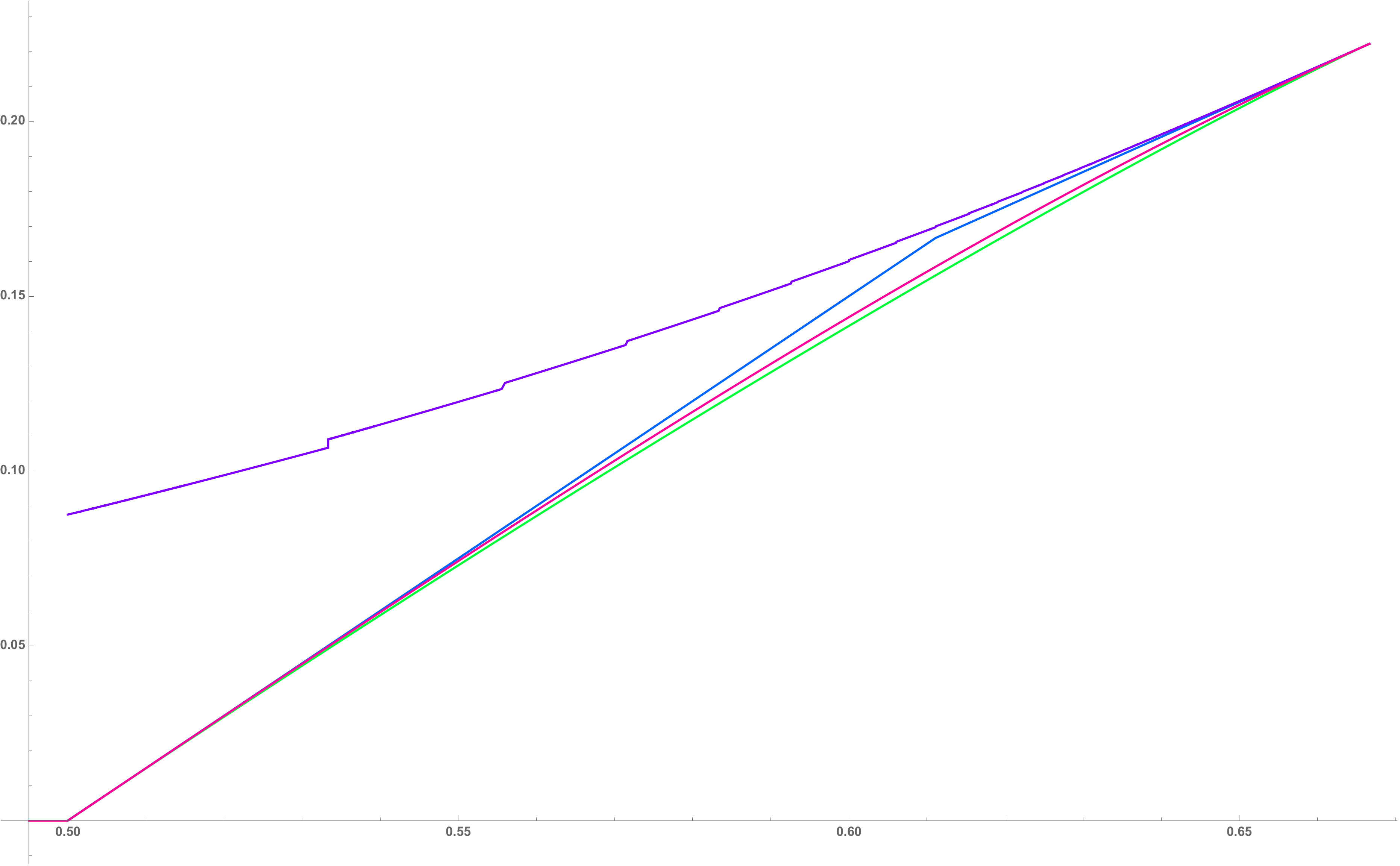}
	\end{center}
	\caption{The functions $\rho\cdot \beta'(\rho)$, $\tau'(\rho)$, $\lambda(\rho)$, $\kappa(\rho)$ (from top to bottom)
	}\label{plot1}
\end{figure}

Clearly by averaging we have that $\kappa(\rho)$ is the smallest of the functions in Figure~\ref{plot1}.  Assume now that Conjecture~\ref{conjecture: tau value} is true. Then Constructions~\ref{construction: upwards} and \ref{construction: downwards} provide $\rho n$-regular graphs $G$ of order $n$ with $t_{\mathrm{max}}(G)= \tau(\rho) n^2/2 +o(n)$. Averaging the triangle-degree over all vertices, this would give that $\lambda(\rho)\leq \tau(\rho)$.

Further assuming Conjecture~\ref{EFG conjecture: minimum book number} is true,  Construction~\ref{construction: even better extremal graphs for book number} gives a $\rho n$-regular graph $G$ of order $n$ with book number $\beta n$ that is triangle-degree regular with  $t_{\mathrm{max}}(G)= \rho \beta (\rho)\binom n2 + o(n^2)$. This would imply that $\tau(\rho)\leq \rho \beta(\rho)$, and all together, 
\begin{align}\label{inequality: four functions}
\kappa(\rho)  \leq \lambda(\rho)  \leq \tau(\rho) \leq \rho\beta(\rho). 
\end{align}
In Figure~\ref{plot1} we plotted the four functions $\kappa(\rho)$, $\lambda(\rho)$, $\tau'(\rho)$ and $\rho \beta'(\rho)$ in the interval $[\frac{1}{2}, \frac{2}{3}]$. As the plot shows, the inequalities in (\ref{inequality: four functions}) with $\tau'$ and $\beta'$ taking the place of $\tau$ and $\beta$ all hold in $[\frac{1}{2}, \frac{2}{3}]$ with equality if and only if $\rho= \frac{1}{2}$ (for the first two inequalities) or $\frac{2}{3}$ (for all three).

\section*{Acknowledgements}
The authors gratefully acknowledge the support of a Wenner--Gren guest professorship awarded to the third author to visit Ume{\aa} in May--June 2017, when we conducted this research. The first author would also like to thank the earlier support of an AMS-Simons grant for a visit to Atlanta in Spring 2016, in the course of which he and the third author began discussing vertex-degree thresholds for covering. Victor Falgas--Ravry's research is supported by VR grant 2016-03488. Klas Markstr\"om's research is supported by VR grant 2014-4897. Yi Zhao's research is supported by NSF grants DMS-1400073 and  DMS-1700622.

\bibliographystyle{plain}

\end{document}